
\documentclass[smallextended,referee,envcountsect,]{svjour3}
\smartqed
\usepackage{graphicx}

%
%
%
%
%
%
%
\usepackage{fix-cm}


\providecommand{\norm}[1]{\lVert#1\rVert}
\usepackage{amssymb}
\usepackage{amsmath}
\setlength{\marginparwidth}{2cm}
\usepackage{todonotes}
\usepackage{mathtools}
\newtheorem{assumption}{Assumption}
\usepackage[colorlinks=true, linkcolor=blue, citecolor=red, urlcolor=magenta]{hyperref}
\usepackage{lineno}
\usepackage{adjustbox}

\providecommand{\norm}[1]{\lVert#1\rVert}
\def\E{\mathbb{E}}
\def\N{\mathbb{N}}

\def\dist{\mathop{\mathrm{dist}}}

\usepackage[linesnumbered,ruled,vlined]{algorithm2e}
\usepackage{subfigure}
\usepackage{booktabs}
\usepackage{cleveref}
\crefname{assumption}{Assumption}{Assumptions} 
%
%
%
%
\date{Received: date / Accepted: date}
\usepackage{hyperref}

\usepackage[indLines=false,noEnd=false]{algpseudocodex}
\usepackage[kerning, spacing]{microtype}
\usepackage{multirow}

\usepackage{float}
\floatstyle{ruled}
\floatname{algorithm}{Algorithm}
\newfloat{algorithm}{tbp!}{loa}
\numberwithin{algorithm}{section}
\usepackage{comment}

\begin{document}
\title{Monotone and nonmonotone linearized block coordinate descent
methods for nonsmooth composite optimization problems}

\titlerunning{Monotone and nonmonotone linearized block coordinate descent method}        

\author{Yassine Nabou$^*$ \and 
 Lahcen El Bourkhissi$^\dagger$ \and
 Sebastian U. Stich$^\ddagger$ \and
 Tuomo Valkonen$^\S$}
\institute{$^*$Department of Mathematics and Statistics, University of Helsinki, Finland. \texttt{yassine.nabou@helsinki.fi}.\\ 
$^\dagger$Automatic Control and Systems Engineering, National University for Science and Technology Politehnica Bucharest, Romania, \texttt{lel@stud.acs.upb.ro}.\\
$^\ddagger$CISPA Helmholtz Center for Information Security, Germany, 
\texttt{stich@cispa.de}.\\
$^\S$Research Center in Mathematical Modeling and Optimization (MODEMAT), Quito, Ecuador \emph{and} Department of Mathematics and Statistics, University of Helsinki, Finland, \texttt{tuomo.valkonen@iki.fi}. 
}

\maketitle

\begin{abstract}

 In this paper, we introduce both monotone and nonmonotone variants of LiBCoD, a \textbf{Li}nearized \textbf{B}lock \textbf{Co}ordinate \textbf{D}escent method for solving composite optimization problems. At each iteration, a random block is selected, and the smooth components of the objective are linearized along the chosen block in a Gauss-Newton approach. For the monotone variant, we establish a global sublinear convergence rate to a stationary point under the assumption of bounded iterates. For the nonmonotone variant, we derive a global sublinear convergence rate without requiring global Lipschitz continuity or bounded iterates. Preliminary numerical experiments highlight the promising performance of the proposed approach.

\end{abstract}

\keywords{Composite problems, nonconvex minimization,  coordinate descent, convergence rates.}

\section{Introduction and motivation}
\label{submission}

Effective optimization methods are fundamental for the realization of machine learning techniques, enabling advances in tasks such as training complex models and improving algorithm efficiency \cite{bottou2018optimization}. Beyond machine learning, optimization plays a central role in diverse fields, such as economics \cite{goetzmann2014modern} and engineering \cite{RawMay:17}, where it drives innovation and problem-solving.
For optimization algorithms to be effective, to achieve maximal performance, it is crucial that they exploit the intricate structures of each problem in question.
A particularly important class of structured optimization problems involves minimizing the sum of a convex function and a composition of a convex function with a smooth map, i.e., 
\begin{equation}  
    \label{eq:problem}  
    \min_{x \in \text{dom}\,\varphi} \varphi(x): = f(x) + h(F(x)) + \sum_{i=1}^{n_{\text{block}}} g_i(x^i), 
\end{equation}  
where \( F(x): = (F_1(x), \cdots, F_m(x)) \) and \( f \) are nonlinear differentiable functions defined on \(\text{dom}\,\varphi\). The functions \( g_i \) are proper, lower semicontinuous (lsc), and convex.
They may be nonsmooth, while \( h \) is a smooth and convex function. Here, \( n = \sum_{i=1}^{n_{\text{block}}} n_i \), \( x^i \in \mathbb{R}^{n_i} \), and \( g(x) := \sum_{i=1}^{n_{\text{block}}} g_i(x^i) \).\footnote{Note that, from the point of view of general theory, $f$ is superfluous, and can be subsumed by $h \circ F$. It is, however, practical to explicitly include it for our application examples.}
The problem \eqref{eq:problem} encompasses a wide range of applications across machine learning and signal processing. For instance, in nonlinear regression problems \cite{dutter1981numerical}, \( h \) represents a loss function, \( F \) denotes the model being trained, e.g., a neural network, and \( g \) acts as a regularizer. Similarly, it is applicable to risk parity portfolio selection \cite{maillard2010properties}, which addresses robust financial decision-making, and robust phase retrieval \cite{duchi2019solving}, a challenge involving the recovery of phase information from magnitude measurements. Additionally, it arises in PDE-constrained inverse problems \cite{roosta2014stochastic}, where the goal is to recover unknown parameters governed by partial differential equations. In this paper, we propose a block coordinate descent approach to address problem \eqref{eq:problem}, which leverages the structure of the problem through a Gauss-Newton setting, allowing for the use of a more approximate model. Existing methods exploit the problem's structure but not within a block coordinate descent framework, motivating our approach.
\paragraph{Related Work}
The gradient method is a widely used optimization technique for solving smooth and differentiable problems by iteratively updating variables to achieve optimality. Its iterative update rule arises from finding the minimizer of the quadratic surrogate model
\[
    f(x) \leq  f(x_k) + \nabla f(x_k)^T (x - x_k) + \frac{1}{2\alpha_k}\|x - x_k\|^2.
\]
This gives rise to the update rule
\[
    x_{k+1} = x_k - \alpha_k \nabla f(x_k).
\]
The step size \( \alpha_k \) is critical for convergence and can be chosen using methods like backtracking line search or a fixed schedule. This iterative process is continued until the gradient norm \( \|\nabla f(x_k)\| \) is sufficiently small, indicating proximity to a stationary point. The gradient method is widely recognized in the optimization community and beyond for its simplicity, low computational cost per iteration, and solid theoretical guarantees across a broad spectrum of problems, including smooth convex and smooth nonconvex formulations \cite{nesterov2018lectures}. 

When the problem takes the form \eqref{eq:problem} with nonsmooth $g_i$, the gradient method can no longer be applied. The proximal gradient method was proposed in \cite{nesterov2013gradient} to handle such problems.
For a desired accuracy \(\epsilon > 0\), the proximal gradient method requires at most \(\mathcal{O}\left(\frac{L_h}{\epsilon^2}\right)\) iterations to achieve an \(\epsilon\)-stationary point of problem \eqref{eq:problem}.
However, a significant limitation of gradient-based methods is that they treat the composite structure \( h(F(x)) \) as a single entity, ignoring its separable nature. This can lead to suboptimal approximations and slower convergence in some cases.

To address this limitation, prox-linear/Gauss-Newton type methods were developed to solve nonlinear least squares problem, where \( h(\cdot) = \frac{1}{2}\|\cdot\|^2 \) \cite{nocedal1999numerical}. These methods are well-suited for optimization problems with a composite structure \( h(F(x)) \), as they preserve and exploit the structural coupling between \( F(x) \) and \( h \), e.g, \cite{nesterov2007modified,cartis2011evaluation,salzo2012convergence,jauhiainen2019gaussnewton,marumo2024accelerated}. Instead of fully linearizing the smooth component \( \ell(x) \), Gauss-Newton methods linearize only \( F(x) \), leaving \( h(\cdot) \) unchanged, and add a quadratic regularization term to control the approximation error. This approach results in a more accurate approximation of the objective function compared to the linearized model used in gradient methods.
Recent work in \cite{marumo2024accelerated} have demonstrated that for problems of the form \eqref{eq:problem}, a Gauss-Newton-type method can achieve an $\epsilon$-stationary point in $\mathcal{O}\left(\frac{\sqrt{L_h}}{\epsilon^2}\right)$ iterations, offering an improvement over gradient-based methods, especially when the Lipschitz constant $L_h$ is large.

The aforementioned methods are generally monotone, meaning that they either require or implicitly ensure a monotonic decrease in the objective function. To maintain this monotonicity, it is typically necessary to assume that the smooth part of the objective function is Lipschitz continuous over the entire space or to assume that the generated iterates remain bounded. However, these assumptions can be relaxed (see, e.g., \cite{kanzow2022convergence,de2023proximal,kanzow2024convergence}).  
For instance, \cite{kanzow2024convergence} proposed a nonmonotone method for minimizing a \textit{simple composite problem} of the form \( f(x) + g(x) \), where \( f \) is locally smooth, and the nonsmooth function \( g \) is bounded below by an affine function. Notably, this method achieves convergence rates comparable to those of monotone approaches. However, to  our knowledge, a nonmonotone method has not yet been specifically developed for composite problems of the form~\eqref{eq:problem} within a Gauss-Newton framework.  

Furthermore, when the decision variable \( x \) has a high dimensionality, which is often the case in machine learning and related fields, computing the full gradient can become computationally expensive. To mitigate this, (block) coordinate methods have been introduced. These methods focus on computing the gradient of the smooth component with respect to a single coordinate or block of coordinates at each iteration. For problems of the form \( f(x) + g(x) \), several coordinate-based methods have been proposed, e.g., \cite{nesterov2012efficiency,richtarik2014iteration,patrascu2015efficient,wright2015coordinate,birgin2022block,deng2020efficiency}.  
For instance, Nesterov \cite{nesterov2012efficiency} established complexity results for coordinate gradient descent methods applied to smooth and convex optimization problems. Building on this work, paper \cite{richtarik2014iteration} extended these results to convex problems with simple composite objective functions. For nonconvex simple composite problems, papers \cite{patrascu2015efficient,wright2015coordinate,birgin2022block,deng2020efficiency} provided convergence analyses, offering valuable insights into the behavior of coordinate descent in nonconvex settings.
While problem \eqref{eq:problem} could theoretically be solved using the standard coordinate descent techniques described earlier by treating it as a simple composite optimization problem ($f(x) + g(x)$), this approach does not fully leverage the problem's structure.

Another approach closely related to block coordinate methods is the use of randomized subspace techniques, which involve projecting the original high-dimensional space onto a lower-dimensional subspace. \cite{cartis2022randomised} propose a Randomized Subspace Gauss-Newton method for solving nonlinear least-squares optimization problems (i.e., problem~\eqref{eq:problem} with $h = \frac{1}{2}\|\cdot\|^2$, and $f = g = 0$). This method employs a sketched Jacobian of the residual ($F(\cdot)$) and solves a reduced linear least-squares problem at each iteration. By randomizing the subspace for minimization, the approach benefits from dimensionality reduction facilitated by results akin to the Johnson-Lindenstrauss (JL) Lemma. These results ensure the problem's dimensionality is reduced while retaining critical structural information. Furthermore, the authors establish a global sublinear rate with high probability. However, the applicability of these results is often constrained by assumptions such as global Lipschitz continuity and smoothness of the optimization problem, which are specific to smooth settings.

\paragraph{Contributions} In this paper, we introduce a novel approach that combines the strengths of the prox-linear/Gauss-Newton setting, which leverages the problem's structure, with block coordinate descent, which reduces both per-iteration computational complexity and memory requirements. This integration results in the development of the monotone LiBCoD method. Additionally, by incorporating the benefits of nonmonotone schemes, we extend our approach to propose a second method, referred to as nonmonotone LiBCoD. Specifically, our main contributions are
\begin{itemize} 
    \item[(i)] We propose two methods in which, at each iteration, we linearize the smooth part of the objective function with respect to randomly chosen block coordinates in a Gauss-Newton type approach and add a dynamic regularization term. The resulting algorithms require solving a strongly convex subproblem at each iteration, which makes them easy to solve.  
    \item[(ii)] We provide global asymptotic convergence, proving that the iterates of both methods converge, in expectation, to a stationary point of problem \eqref{eq:problem}. Additionally, our methods guarantee convergence to an \(\epsilon\)-stationary point of problem \eqref{eq:problem} in at most \(\mathcal{O}(1/\epsilon^2)\) iterations.  
    \item[(iii)] We test the performance of the proposed methods and compare it with several existing baseline methods from the literature. 
\end{itemize}  
To our knowledge, this is the first work to propose and analyze these methods, offering an efficient solution to composite optimization problems that exhibit the structural characteristics of \ref{eq:problem}.

\paragraph{Structure of the paper} This paper is organized as follows. Section \ref{sec2} introduces the notation used throughout the paper. In Section \ref{sec3}, we present the monotone variant of our method along with its convergence analysis, followed by the nonmonotone variant in Section \ref{sec4}. Section \ref{sec5} addresses nonconvex optimization with nonlinear equality constraints, utilizing the quadratic penalty method. Finally, in Section \ref{sec6}, we provide a numerical comparison of our method with existing algorithms.
\section{Notations and preliminaries} \label{sec2}
In what follows, we use $\| \cdot \|$ to denote the 2-norm of a vector or a matrix, respectively. Let $n = \sum_{i=1}^{n_\text{block}} n_i$. For a differentiable function $\phi: \mathbb{R}^n \to \mathbb{R}$, we denote its gradient with respect to the $i^{\text{th}}$ block at a point $x$ as $\nabla_i \phi(x) \in \mathbb{R}^{n_i}$, for any $i \in \{1, \dots, n_\text{block}\}$. For a differentiable vector function $F: \mathbb{R}^n \to \mathbb{R}^m$, we denote its Jacobian with respect to the $i^{\text{th}}$ block at a given point $x$ as $\nabla_i F(x) \in \mathbb{R}^{m \times n_i}$. We define the distance from a point $x$ to a set $A$ as $\dist(x, A) := \min_{a \in A} \| x - a \|$. Furthermore, for a proper, lower semicontinuous (lsc) function $g: \mathbb{R}^n \to \mathbb{R}$, we use $\partial g(x)$ to refer to its Mordukhovich (aka.~limiting) subdifferential at a point $x$, and $\partial^{\infty} g(x)$ to refer to the horizon subdifferential.
This, instead of the plain convex subdifferential, is required due to the nonconvexity of $h \circ F$ and hence the composite objective $\phi$.
For more details about the subdifferential of nonsmooth functions, we refer to \cite{rockafellar2009variational}. In the following, we say that a point \( x^* \in \mathbb{R}^n \) is a stationary point of problem~\eqref{eq:problem} if \( 0 \in \partial \varphi(x^*) \). Moreover, for \( \epsilon > 0 \), \( x^* \) is said to be an \( \epsilon \)-stationary point of problem \eqref{eq:problem} if \( \dist\left( 0, \partial \varphi(x^*) \right) \leq \epsilon \).
\section{Monotone Linearized block coordinate descent method} \label{sec3}

In this section, we present the monotone LiBCoD method for solving the composite problem \eqref{eq:problem} along with the necessary assumptions for our analysis. We begin by introducing the following assumptions:
\begin{assumption}\label{ass:1}
    \begin{enumerate} 
    \item Given a compact set $\mathcal{C}$, the gradients of $f$ and $F_j$ for $j=1:m$ are block coordinate-wise Lipschitz continuous in $\mathcal{C}$, i.e., $\forall i\in \{1,\cdots,n_{\text{block}}\},\;x,y\in \mathcal{C}$
        \begin{align*}
        \|\nabla_i f(x) - \nabla_if(y)\|\leq L^{f}_i \|x - y\|,
        \|\nabla_i F_j(x) - \nabla_i F_j(y)\|\leq L^{F_j}_i \|x - y\|.
        \end{align*}
    \item $h$ is convex, differentiable with Lipschitz gradient
        \begin{align*}
            \|\nabla h(u) - \nabla h(v)\|\leq L_{h} \|u - v\|,\;\forall \;u,v\in\mathbb{R}^{m}.
        \end{align*}
    \item Each $g_i$ is a proper, lsc, convex function.
    \item $ \forall x\in\mathcal{C}$, $u\in\mathbb{R}^m$ and $i\in\{1,\cdots,n_{\text{block}}\}$ we have
        $
             f(x) \geq f^*,\; h(u)\geq h^*,\;g_i(x) \geq g^*.
        $
    \end{enumerate}
\end{assumption}
Note that these assumptions are standard in composite and nonconvex optimization problems \cite{marumo2024accelerated,xie2021complexity}.
In the sequel, for all $i=\{1,\ldots,n_{\text{block}}\}$, $\bar{x}^i, s\in\mathbb{R}^{n_i}$ and $\bar{x}\in\mathbb{R}^n$, we denote
\begin{align*}
    &l^i_f(s;\bar{x}^i)
    :=
    f(\bar{x})+\langle\nabla_i f(\bar{x}),s-\bar{x}^i\rangle,\quad
    l^i_F \big(s;\bar{x}^i \big):=F(\bar{x})+ \nabla_i F(\bar{x}) \big(s-\bar{x}^i \big),\\
    &\quad\text{and}\quad
    \bar{\varphi}^i(s;\bar{x}) = l^i_f \big( s;\bar{x}^i \big)+ h \big( l^i_F(s;\bar{x}^i) \big) + g_i(s).
\end{align*}
We present in \cref{alg:libcod} our monotone method for solving problem  \eqref{eq:problem}.
\begin{algorithm*}[ht]
\caption{Monotone LiBCoD }
\label{alg:libcod}

\KwIn{$x_0$, $\beta_1 \geq \frac{\beta_{\min}}{2}>0$, $p_{\min} \in (0,1)$}
Set $k = 0$

\While{termination criterion not satisfied}{
  Pick a random index $i_k \in \{1,\cdots,n_{\text{block}}\}$ with probability $p_{i_k} \geq p_{\min}$\:

  \Repeat(
  ){
    \textcolor{blue}{$\varphi(x_{k+1}) \leq  \varphi(x_k) - \frac{\beta_{k+1}}{2}\|x_{k+1} - x_k\|^{2}$}
  }{
    $\beta_{k+1} := 2\beta_{k+1}$ \label{line:lin_search}

    $s_{k+1} := \arg\min\limits_{s \in \mathbb{R}^{n_{i_k}}} \resizebox{!}{2ex}{$\langle\nabla_{i_k} f(x_k),s-x_k^{i_k}\rangle + h\left(F(x_k)+\langle\nabla_{i_k} F(x_k),s-x_k^{i_k}\rangle\right) + g_{i_k}(s) + \frac{\beta_{k+1}}{2}\|s - x^{i_k}\|^2$}$ 
    
    $x^{i_k}_{k+1} := s_{k+1}$ and $x^{i}_{k+1} = x_k^i$ for $i\neq i_k$
  }

  Set $k=k+1$ and $\beta_{k+1} = \max\left\{\frac{\beta_{k}}{4},\frac{\beta_{\min}}{2}\right\}$
}
\end{algorithm*}
Note that our subproblem on Monotone LiBCoD \ref{alg:libcod} is constructed along a single block, making it significantly more efficient to solve compared to constructing it over the full block. Furthermore, the subproblem on Monotone LiBCoD \ref{alg:libcod} is strongly convex, allowing the use of the accelerated proximal gradient method \cite{beck2009fast,nesterov2018lectures}, which ensures linear convergence. In what follows, we will frequently use the following notations:
$
\Delta{x}_{k+1} := x_{k+1} - x_k .
$

\subsection{Sufficient decrease}
We start by proving that the inner process, in Monotone LiBCoD \ref{alg:libcod}, is well-defined, i.e., there exists a suitable $\beta_{k+1}$ that guarantees the decrease in Line 8 of \ref{line:lin_search}. Specifically, under \cref{ass:1}, we show that if $\beta_{k+1}$ satisfies
\begin{equation} \label{eq:beta_cond}
    \beta_{k+1} \geq L^f_{i_k} + L^F_{i_k}\sqrt{2 L_h}\sqrt{\varphi(x_k)- \varphi^*},
\end{equation}
where $\varphi^* = h^* + f^* + g^*$, then the process in Monotone LiBCoD \ref{alg:libcod}, is well defined. 
Similar choice has recently been considered in \cite{marumo2024accelerated} in the case where $f\equiv 0$ (i.e., $L^f_{i}  = 0$ for all $i$). 
\begin{lemma}
    \label{lemma1}
    Let $(x_k)_{k\geq 0}$ be generated by Monotone LiBCoD \ref{alg:libcod}. If \cref{ass:1} holds on a compact set $\mathcal{C}$ containing the iterates, and $\beta_{k+1}$ satisfies \eqref{eq:beta_cond}, then
    \begin{align}
        \label{decrease}
         \varphi(x_{k+1}) \leq  \varphi(x_{k}) - \frac{\beta_{k+1}}{2} \|x_{k+1} - x_k\|^2.
    \end{align}
\end{lemma}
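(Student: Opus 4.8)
The plan is to show that the candidate stepsize satisfying \eqref{eq:beta_cond} already forces the sufficient–decrease inequality \eqref{decrease}, so that the doubling loop in Line~\ref{line:lin_search} must terminate. Since only block $i_k$ is updated, write $s = x_{k+1}^{i_k}$ and note $\Delta x_{k+1}$ is supported on that block with $\|\Delta x_{k+1}\| = \|s - x_k^{i_k}\|$. The first step is a standard descent-lemma estimate along the active block: using Assumption~\ref{ass:1}(1), for the chosen block $i_k$,
\begin{align*}
  f(x_{k+1}) &\le l^{i_k}_f(s; x_k^{i_k}) + \frac{L^f_{i_k}}{2}\|\Delta x_{k+1}\|^2,\\
  \|F(x_{k+1}) - l^{i_k}_F(s; x_k^{i_k})\| &\le \frac{L^F_{i_k}}{2}\|\Delta x_{k+1}\|^2,
\end{align*}
where $L^F_{i_k}$ is an aggregate of the $L^{F_j}_{i_k}$ (e.g.\ $L^F_{i_k} = \sqrt{\sum_j (L^{F_j}_{i_k})^2}$, matching the norm convention). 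The second estimate controls how far the true image $F(x_{k+1})$ is from its block-linearization.

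Next I would pass through $h$. Because $h$ is convex with $L_h$-Lipschitz gradient, the descent lemma gives $h(u) \le h(v) + \langle \nabla h(v), u - v\rangle + \frac{L_h}{2}\|u-v\|^2$; applied with $u = F(x_{k+1})$, $v = l^{i_k}_F(s;x_k^{i_k})$ this yields
\[
  h(F(x_{k+1})) \le h\bigl(l^{i_k}_F(s;x_k^{i_k})\bigr) + \langle \nabla h(v), F(x_{k+1}) - v\rangle + \frac{L_h}{2}\|F(x_{k+1}) - v\|^2.
\]
The middle term is bounded by $\|\nabla h(v)\|\cdot \frac{L^F_{i_k}}{2}\|\Delta x_{k+1}\|^2$, and the last term is $O(\|\Delta x_{k+1}\|^4)$, which is higher order and can be dominated once $\|\Delta x_{k+1}\|$ is controlled (or simply absorbed using compactness of $\mathcal C$). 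The key remaining point is to bound $\|\nabla h(v)\|$: since $h$ is convex with $L_h$-Lipschitz gradient it satisfies $\frac{1}{2L_h}\|\nabla h(u)\|^2 \le h(u) - h^*$, so $\|\nabla h(v)\| \le \sqrt{2L_h}\sqrt{h(v) - h^*}$. Relating $h(v) - h^*$ back to $\varphi(x_k) - \varphi^*$ is what produces the $\sqrt{2L_h}\sqrt{\varphi(x_k)-\varphi^*}$ factor in \eqref{eq:beta_cond}: by convexity of the model, $\bar\varphi^{i_k}(x_k^{i_k}; x_k) = \varphi(x_k)$, and since $s$ minimizes the regularized model, $h(v) + \text{(other model terms)} \le \varphi(x_k)$, from which $h(v) - h^* \le \varphi(x_k) - \varphi^*$.

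Assembling the pieces, add $g_{i_k}(s)$ and the unchanged blocks' contributions. Using that $s_{k+1}$ minimizes the regularized model $\bar\varphi^{i_k}(\cdot;x_k) + \frac{\beta_{k+1}}{2}\|\cdot - x_k^{i_k}\|^2$, compare its value at $s_{k+1}$ against its value at $x_k^{i_k}$ to get $\bar\varphi^{i_k}(s;x_k) + \frac{\beta_{k+1}}{2}\|\Delta x_{k+1}\|^2 \le \varphi(x_k)$. Combining this with the three estimates above,
\[
  \varphi(x_{k+1}) \le \varphi(x_k) - \frac{\beta_{k+1}}{2}\|\Delta x_{k+1}\|^2 + \frac{1}{2}\Bigl(L^f_{i_k} + L^F_{i_k}\sqrt{2L_h}\sqrt{\varphi(x_k)-\varphi^*}\Bigr)\|\Delta x_{k+1}\|^2 + (\text{higher order}),
\]
and the hypothesis \eqref{eq:beta_cond} makes the bracket-times-$\tfrac12$ term at most $\frac{\beta_{k+1}}{2}\|\Delta x_{k+1}\|^2$ minus what is needed to swallow the higher-order remainder (this is where one uses a slightly strict inequality, or a fixed safety factor, together with boundedness of $\|\Delta x_{k+1}\|$ on $\mathcal C$). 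This gives \eqref{decrease}. The main obstacle I anticipate is precisely the bookkeeping of that quartic remainder $\frac{L_h}{2}\|F(x_{k+1})-v\|^2 = O(\|\Delta x_{k+1}\|^4)$: strictly it needs either an a priori bound on the step length (available because the iterates, hence the steps, stay in the compact set $\mathcal C$) or a refinement of the constant in \eqref{eq:beta_cond}; getting the constants to line up cleanly is the delicate part, whereas the convexity/descent-lemma chain is routine.
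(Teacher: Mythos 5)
Your outline follows the same skeleton as the paper (block descent lemma for $f$, quadratic bound on the linearization error of $F$, descent lemma for $h$, the bound $\tfrac{1}{2L_h}\|\nabla h(u)\|^2 \le h(u)-h^*$, and a comparison of the regularized model at $s_{k+1}$ versus $x_k^{i_k}$), but as assembled it does not prove the stated inequality, for two concrete reasons. First, you only compare model values, obtaining $\bar\varphi^{i_k}(s_{k+1};x_k) + \tfrac{\beta_{k+1}}{2}\|\Delta x_{k+1}\|^2 \le \varphi(x_k)$ (up to the unchanged $g_i$ bookkeeping). With that, your final display reads $\varphi(x_{k+1}) \le \varphi(x_k) - \tfrac{\beta_{k+1}}{2}\|\Delta x_{k+1}\|^2 + \tfrac12\bigl(L^f_{i_k}+L^F_{i_k}\sqrt{2L_h}\sqrt{\varphi(x_k)-\varphi^*}\bigr)\|\Delta x_{k+1}\|^2 + \text{h.o.}$, and under \eqref{eq:beta_cond} the error term can be as large as $\tfrac{\beta_{k+1}}{2}\|\Delta x_{k+1}\|^2$, so the net decrease is zero (plus a positive remainder): you do not recover \eqref{decrease}. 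The paper avoids this by exploiting the $\beta_{k+1}$-strong convexity of the subproblem objective $q$ at the comparison point, which upgrades the model comparison to \eqref{initial_decrease} with coefficient $\beta_{k+1}$ (not $\beta_{k+1}/2$); that extra $\tfrac{\beta_{k+1}}{2}\|\Delta x_{k+1}\|^2$ is exactly the decrease that survives in \eqref{decrease}.

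Second, your treatment of the quartic remainder $\tfrac{L_h}{2}\|F(x_{k+1})-l^{i_k}_F(s_{k+1};x_k)\|^2 = \tfrac{L_h(L^F_{i_k})^2}{8}\|\Delta x_{k+1}\|^4$ is a genuine gap: the lemma is claimed under exactly \eqref{eq:beta_cond}, with no safety factor, and compactness of $\mathcal{C}$ only makes $\|\Delta x_{k+1}\|$ bounded, not small, so a bounded positive quartic cannot be ``swallowed'' by a quadratic margin that your estimate has already fully spent. The paper's mechanism is different and exact: when bounding $\|\nabla h(l^{i_k}_F(s_{k+1};x_k))\|$ it does not discard the quadratic decrease — it keeps it inside the bound on $h(l^{i_k}_F(s_{k+1};x_k))-h^*$ via \eqref{initial_decrease} and \eqref{smooth_object}, and after Young's inequality this yields $\|\nabla h\| \le \tfrac{\beta_{k+1}-L^f_{i_k}}{L^F_{i_k}} - L^F_{i_k}L_h\|\Delta x_{k+1}\|^2$ (see \eqref{bound_grad_norm_squared}); multiplying by $\tfrac{L^F_{i_k}}{2}\|\Delta x_{k+1}\|^2$ in \eqref{decrease_feasib} produces a negative quartic $-\tfrac{L_h(L^F_{i_k})^2}{2}\|\Delta x_{k+1}\|^4$ that strictly dominates the positive one, giving \eqref{to_use_next}. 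Also note a smaller inaccuracy in the same spot: your step $h(v)-h^* \le \varphi(x_k)-\varphi^*$ needs \eqref{smooth_object} to replace the linear term $l^{i_k}_f(s_{k+1};x_k)$ by $f(x_{k+1})$ (bounded below by $f^*$) at the cost of $+\tfrac{L^f_{i_k}}{2}\|\Delta x_{k+1}\|^2$; this is precisely why $\beta_{k+1}-L^f_{i_k}$, rather than $\beta_{k+1}$, appears in the paper's Young-inequality step.
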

\begin{proof}
    This proof follows a reasoning similar to that in \cite{marumo2024accelerated} [Lemma 3.1], adapted to the coordinate descent framework with an additional smooth function $f$.  
    Note that the subproblem's objective function
    \[
        q(s) := \bar{\varphi}^{i_k}(s;x_k)+\frac{\beta_{k+1}}{2}\|s-x^i_k\|^2,
    \]
    By the strong convexity of this objective and $s_{k+1}$ being a minimiser, i.e., $0 \in \partial q(s_{k+1})$,
    \[
        q(x_k^i)
        \ge
        q(s_{k+1}) + \frac{\beta_{k+1}}{2}\|s_{k+1}-x^i_k\|^2.
    \]
    Hence, expanding $q$,
    the fact that $\bar{\varphi}^{i_k}(x^{i_k}_k;x_k)=\varphi(x_{k}) - \sum_{i\neq i_k} g_i(x^i_{k})$ and making use of the fact that $x^i_{k+1} = x^i_k$ for $i\neq i_k$, we get
    \begin{equation} \label{initial_decrease}
        \bar{\varphi}^{i_k}(s_{k+1};x_k)\leq \varphi(x_{k}) - \sum_{i\neq i_k} g_i(x^i_{k+1}) -\beta_{k+1}\|x_{k+1}-x_k\|^2.
    \end{equation}
    Furthermore, since \( f \) has a block Lipschitz gradient with a Lipschitz constant \( L_{i_k}^f \), it follows that \cite{nesterov2018lectures}
        \begin{equation}
        \label{smooth_object}
        f(x_{k+1}) - l_f^{i_k}(s_{k+1};x_k) \leq \frac{L^f_{i_k}}{2}\|x_{k+1}-x_k\|^2.
    \end{equation}
     Moreover, since \( h \) has a Lipschitz gradient with a Lipschitz constant \( L_h \), we have
    \begin{equation}
    \label{decrease_feasib}
    \begin{split}
         h(F(x_{k+1}))& -  h(l^{i_k}_F(s_{k+1};x_k)) 
        \\
        &\leq   \langle \nabla h \left( l^{i_k}_F(s_{k+1};x_k)\right), F(x_{k+1})-l^{i_k}_F(s_{k+1};x_k) \rangle \\
        \MoveEqLeft[-1] + \frac{L_h}{2}\|F(x_{k+1})-l^{i_k}_F(s_{k+1};x_k)\|^2 
        \\
        & \leq \left\|\nabla h \left( l^{i_k}_F(s_{k+1};x_k)\right)\right\| \|F(x_{k+1})-l^{i_k}_F(s_{k+1};x_k) \|\\
        \MoveEqLeft[-1] + \frac{L_h}{2}\|F(x_{k+1})-l^{i_k}_F(s_{k+1};x_k)\|^2 
        \\
        &\overset{\text{Ass. } \ref{ass:1}}{\leq} \left\|\nabla h \left( l^{i_k}_F(s_{k+1};x_k)\right)\right\|  \frac{L^F_{i_k}}{2}\|\Delta x_{k+1}\|^2 + \frac{L_h}{2} \left(\frac{L^F_{i_k}}{2}\|\Delta x_{k+1}\|^2\right)^2. 
    \end{split}
    \end{equation}
    Using the Young's inequality $2\sqrt{a b} - a \leq b$ for any $a, b \geq 0$, and the following relation for $u =l^{i_k}_F(s_{k+1};x_k) \in \mathbb{R}^m$, with $v = u - \frac{1}{L_h} \nabla h(u)$
    \[
        h^* - h(u) \leq h(v) - h(u)
        \leq \langle \nabla h(u), v - u \rangle + \frac{L_h}{2} \norm{v - u}^2 = -\frac{1}{2L_h} \norm{\nabla h(u)}^2,
    \]
    we bound
    \begin{equation}
        \label{bound_grad_norm_squared}
        \begin{split}
            &
            \left\|\nabla h \left( l^{i_k}_F(s_{k+1};x_k)\right)\right\| - \frac{ \beta_{k+1} - L^f_{i_k}}{2L^F_{i_k}} \leq  \frac{L^F_{i_k}}{ 2(\beta_{k+1} - L^f_{i_k})}    \left\|\nabla h \left( l^{i_k}_F(s_{k+1};x_k)\right)\right\|^2
            \\
            &
            \leq
            \frac{L^F_{i_k}L_h}{ \beta_{k+1} - L^f_{i_k}} \left( h(l^{i_k}_F(s_{k+1};x_k)) - h^*  \right)
            =
            \frac{L^F_{i_k}L_h}{ \beta_{k+1} - L^f_{i_k}}
            \\
            \MoveEqLeft[-4]
            \left( \bar{\varphi}^{i_k}(s_{k+1};x_k) - h^* - g_{i_k}(s_{k+1})- f(x_{k+1}) + f(x_{k+1}) - l_f^{i_k}(s_{k+1};x_k)  \right)
            \\
            &
            \overset{\eqref{initial_decrease}, \eqref{smooth_object}}{\leq}
            \frac{L^F_{i_k} L_h}{\beta_{k+1} - L^f_{i_k}} \left( \varphi(x_k) - \varphi^* - \frac{2\beta_{k+1} - L^F_{i_k}}{2}\|\Delta x_{k+1}\|^2 \right)
            \\
            &
            \leq
            \frac{L^F_{i_k} L_h}{\beta_{k+1} - L^f_{i_k}} \left( \varphi(x_k) - \varphi^*\right) - L^F_{i_k} L_h\|\Delta x_{k+1}\|^2
            \\
            &
            \overset{\eqref{eq:beta_cond}}{\leq}\frac{\beta_{k+1}-L^f_{i_k}}{2L^F_{i_k}} - L^F_{i_k} L_h\|\Delta x_{k+1}\|^2,
        \end{split}
    \end{equation}
    where third inequality follows from the definition of $\bar{\varphi}^{i_k}$. Moreover, Using \eqref{bound_grad_norm_squared} in \eqref{decrease_feasib}, we deduce that
    \begin{equation}
        \label{to_use_next}
        \begin{split}
                    h\left(F(x_{k+1})\right) &- h\left(l^{i_k}_F(x_{k+1};x_k) \right)\\
        & \leq
        \frac{\beta_{k+1}-L^f_{i_k}}{2} \|\Delta x_{k+1}\|^2 - \left(\frac{L_h (L^F_{i_k})^2}{2} - \frac{L_h (L^{F}_{i_k})^2}{8} \right) \|\Delta x_{k+1}\|^4
        \\
        &=
        \frac{\beta_{k+1}-L^f_{i_k}}{2} \|\Delta x_{k+1}\|^2-  \frac{3 L_h (L^{F}_{i_k})^2}{8}  \|\Delta x_{k+1}\|^4.
        \end{split}
    \end{equation}
    Moreover, we have
    \begin{align*}
    \begin{split}
        \varphi(x_{k+1}) -\sum_{i\neq i_k} g_i(x^i_{k+1}) - \bar{\varphi}^{i_k}(s_{k+1};x_k)
        &=
        f(x_{k+1}) - l^{i_k}_f(s_{k+1};x_k) + h(F(x_{k+1}))\\
        \MoveEqLeft [-1] - h(l^{i_k}_F(x_{k+1};x_k)).
    \end{split}
    \end{align*}
    Using  \eqref{smooth_object} and \eqref{to_use_next} in the previous relation, it follows that
    \[
        \varphi(x_{k+1}) \leq \bar{\varphi}^{i_k}(s_{k+1};x_k) + \sum_{i\neq i_k} g_i(x^i_{k+1}) + \frac{\beta_{k+1}}{2} \|\Delta x_{k+1}\|^2.
    \]
    Finally, using \eqref{initial_decrease}, we get
    \begin{align*}
        \varphi(x_{k+1}) \leq  \varphi(x_{k}) - \frac{\beta_{k+1}}{2} \|\Delta x_{k+1}\|^2. 
    \end{align*}
    This proves our statement. \hfill\qed
\end{proof}
Note that in Monotone LiBCoD \ref{alg:libcod}, $\beta_{k+1}$ is always upper bounded by
\begin{equation}
\label{bar_gamma}
    \bar{\beta} := \sup_{k\geq 1}\beta_k \leq 2 \left(L^f+ L^F\sqrt{2L_h}\sqrt{\varphi(x_0)-\varphi^*}\right),
\end{equation}
 where $L^f:=\max_{i\in\{1, \ldots, n_{\text{block}}\}} L^f_{i}$  and  $L^F:=\max_{i\in\{1, \ldots, n_{\text{block}}\}} L^F_{i}$.
\noindent In the next lemma, we show that the subgradient of $\varphi(\cdot)$ in problem \eqref{eq:problem}, with respect to the chosen block, can be bounded by the difference between consecutive iterates.

\begin{lemma} \label{bound_subgradient}
Let \((x_k)_{k \geq 0}\) be generated by Monotone LiBCoD \ref{alg:libcod}. If \cref{ass:1} holds on a compact set to which the iterates belong, then, for all \(k \geq 0\), we have
\begin{equation}\label{bounde_gradient1}
    \begin{split}
     \dist\left(0, \partial_{i_k} \varphi (x_{k+1})  \right)
     & \leq \left( L^f_{i_k} + L^F_{i_k} \sqrt{2L_h} \sqrt{ \varphi(x_0) - \varphi^*} + \beta_{k+1}\right)  \|\Delta x_{k+1}\|
     \\
     \MoveEqLeft[-1] + \frac{L_h M^{F}_{i_k}L^{F}_{i_k}}{2} \|\Delta x_{k+1}\|^2,
    \end{split}
\end{equation}
where $\|\nabla_{i_k}F(x_k)\| \leq M^{F}_{i_k}$.
\end{lemma}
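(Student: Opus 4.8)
The plan is to use the first-order optimality condition of the block subproblem to produce an explicit element of $\partial_{i_k}\varphi(x_{k+1})$, and then estimate its norm by comparing the quantities evaluated at $x_{k+1}$ against those evaluated at the linearization point $x_k$ — exactly the differences controlled by \cref{ass:1}.

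First, since $s_{k+1}$ is the (unique) minimizer of the strongly convex subproblem $q(s) = l^{i_k}_f(s;x_k^{i_k}) + h\bigl(l^{i_k}_F(s;x_k^{i_k})\bigr) + g_{i_k}(s) + \frac{\beta_{k+1}}{2}\|s-x_k^{i_k}\|^2$, the inclusion $0 \in \partial q(s_{k+1})$, together with differentiability of $l^{i_k}_f$ and $h\circ l^{i_k}_F$, yields some $\xi \in \partial g_{i_k}(s_{k+1})$ with
$$\nabla_{i_k} f(x_k) + \nabla_{i_k} F(x_k)^T \nabla h\bigl(l^{i_k}_F(s_{k+1};x_k^{i_k})\bigr) + \beta_{k+1}\bigl(s_{k+1}-x_k^{i_k}\bigr) + \xi = 0.$$
Since only the $i_k$-th block is updated, $x_{k+1}^{i_k} = s_{k+1}$, so $\xi \in \partial g_{i_k}(x_{k+1}^{i_k})$; as $f + h\circ F$ is $C^1$ and $g$ is block separable, the sum and chain rules for the limiting subdifferential show that
$$v := \nabla_{i_k} f(x_{k+1}) + \nabla_{i_k} F(x_{k+1})^T \nabla h\bigl(F(x_{k+1})\bigr) + \xi \in \partial_{i_k}\varphi(x_{k+1}).$$

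Subtracting the two displays and using $\|s_{k+1}-x_k^{i_k}\| = \|\Delta x_{k+1}\|$, I would bound $\|v\|$ by the triangle inequality through four terms: (i) $\|\nabla_{i_k} f(x_{k+1}) - \nabla_{i_k} f(x_k)\| \le L^f_{i_k}\|\Delta x_{k+1}\|$ from block-Lipschitzness of $\nabla f$; (ii) $\|(\nabla_{i_k} F(x_{k+1}) - \nabla_{i_k} F(x_k))^T \nabla h(F(x_{k+1}))\| \le L^F_{i_k}\|\Delta x_{k+1}\|\,\|\nabla h(F(x_{k+1}))\|$ from block-Lipschitzness of the Jacobian; (iii) $\|\nabla_{i_k} F(x_k)^T(\nabla h(F(x_{k+1})) - \nabla h(l^{i_k}_F(s_{k+1};x_k^{i_k})))\| \le M^F_{i_k} L_h\,\|F(x_{k+1}) - l^{i_k}_F(s_{k+1};x_k^{i_k})\| \le M^F_{i_k} L_h \tfrac{L^F_{i_k}}{2}\|\Delta x_{k+1}\|^2$ from Lipschitzness of $\nabla h$ and the second-order remainder estimate already used in the proof of \cref{lemma1}; and (iv) $\beta_{k+1}\|\Delta x_{k+1}\|$. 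Summing these and using $\dist(0,\partial_{i_k}\varphi(x_{k+1})) \le \|v\|$ gives \eqref{bounde_gradient1}, provided one controls $\|\nabla h(F(x_{k+1}))\|$ in (ii).

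This last point is the only nonroutine step. I would reuse the inequality $\|\nabla h(u)\|^2 \le 2L_h(h(u) - h^*)$ derived in the proof of \cref{lemma1}, combined with $h(F(x_{k+1})) - h^* \le \varphi(x_{k+1}) - \varphi^*$ (a consequence of the lower bounds $f \ge f^*$, $g_i \ge g^*$ from \cref{ass:1}) and the monotonicity $\varphi(x_{k+1}) \le \varphi(x_0)$, which holds because every accepted iterate passes the descent test of \cref{alg:libcod} (cf.\ \cref{lemma1}). This yields $\|\nabla h(F(x_{k+1}))\| \le \sqrt{2L_h}\sqrt{\varphi(x_0)-\varphi^*}$, so that term (ii) contributes $L^F_{i_k}\sqrt{2L_h}\sqrt{\varphi(x_0)-\varphi^*}\,\|\Delta x_{k+1}\|$, which, added to (i) and (iv), forms the first bracket in \eqref{bounde_gradient1}. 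The main obstacle is thus not a deep one: it is keeping track of which estimate controls which difference, and ensuring the $\nabla h$ bound is made uniform in $k$ via the descent property rather than via an a priori boundedness assumption.
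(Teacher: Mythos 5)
Your proposal is correct and follows essentially the same route as the paper's own proof: the subproblem's optimality condition supplies the subgradient element, the triangle inequality splits the error into the same four terms bounded via the block-Lipschitz constants, the quadratic linearization remainder, and the estimate $\|\nabla h(u)\|\le\sqrt{2L_h(h(u)-h^*)}$, with the monotone descent of $\varphi$ making the bound uniform via $\varphi(x_0)-\varphi^*$. No gaps to report.
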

\begin{proof}
    By the optimality condition corresponding to the $s_{k+1}$ determining subproblem on Line 6 of Monotone LiBCoD \ref{alg:libcod}, we have
    \[
        -\nabla_{i_k} f(x_{k})-\nabla_{i_k} F(x_{k})^T \nabla h\left( l^{i_k}_F(s_{k+1};x_k)\right)-\beta_{k+1}(x_{k+1}-x_{k})\in \partial g_{i_k}(s_{k+1}).
    \]
    It then follows, by exploiting the definition of $\varphi$ and the properties of the derivative, that
    \begin{align*}
        &\dist\left(0,\partial_{i_k} \varphi (x_{k+1})\right)\\
        &=  \dist\left(-\nabla_{i_k} f(x_{k+1}) - \nabla_{i_k} F(x_{k+1})^T \nabla h \left( F(x_{k+1})\right) , \partial g_{i_k}(s_{k+1})\right)\\
        & \leq  \Bigg \|\nabla_{i_k} f(x_{k+1}) - \nabla_{i_k} f(x_k) + \Big(\nabla_{i_k} F(x_{k+1}) - \nabla_{i_k} F(x_{k})\Big)^T\nabla h\left(  F(x_{k+1})\right)  \\
        & -\beta_{k+1}\Delta x_{k+1} +\nabla_{i_k} F(x_{k})^T\left( \nabla h \left( F(x_{k+1}) \right)-\nabla h\left( l^{i_k}_F(s_{k+1};x_k)\right)\right) \Bigg\|.
    \end{align*}
    Using the triangle inequality, we further get
    \begin{equation}
        \label{eq:lem2-bndsub}
        \begin{split}
            &
            \dist\left(0, \partial_{i_k} \varphi (x_{k+1}) \right) \leq  \|\nabla_{i_k} f(x_{k+1})-\nabla_{i_k} f(x_k)\|
            \\
            \MoveEqLeft[-1] + \left\|\nabla h\left( F(x_{k+1})\right)\right\|\left\|\nabla_{i_k} F(x_{k+1}) - \nabla_{i_k} F(x_{k})\right\|+\beta_{k+1}\|\Delta x_{k+1}\|
            \\
            \MoveEqLeft[-1]
            +\|{\nabla_{i_k} F(x_{k})}\|\left\| \nabla h \left( F(x_{k+1}) \right)-\nabla h\left( l^{i_k}_F(s_{k+1};x_k)\right)\right\|
            \\
            &
            {\overset{{\text{Ass. } \ref{ass:1}}}{\leq}} \left(L^f_{i_k}+\left\|\nabla h\left( F(x_{k+1})\right)\right\| L^F_{i_k} +\beta_{k+1}\right)\|\Delta x_{k+1}\|
            + \frac{L_h M^{F}_{i_k}L^{F}_{i_k}}{2} \|\Delta x_{k+1}\|^2
            \\
            &\leq \left( L^f_{i_k} + L^F_{i_k} \sqrt{2L_h \left(h(F(x_k)) \!-\! h^*\right)} + \beta_{k+1}\right) \|\Delta x_{k+1}\|  + \frac{L_h M^{F}_{i_k}L^{F}_{i_k}}{2} \|\Delta x_{k+1}\|^2
            \\
            &
            \leq  \left( L^f_{i_k} + L^F_{i_k} \sqrt{2L_h \left(\varphi(x_k) - \varphi^*\right)} + \beta_{k+1}\right)  \|\Delta x_{k+1}\| + \frac{L_h M^{F}_{i_k}L^{F}_{i_k}}{2} \|\Delta x_{k+1}\|^2
            \\
            &
            \leq  \left( L^f_{i_k} + L^F_{i_k} \sqrt{2L_h \left(\varphi(x_0) - \varphi^*\right)} + \beta_{k+1}\right)  \|\Delta x_{k+1}\| + \frac{L_h M^{F}_{i_k}L^{F}_{i_k}}{2} \|\Delta x_{k+1}\|^2
             \\
            &
            \leq  \left( L^f + L^F \sqrt{2L_h \left(\varphi(x_0) - \varphi^*\right)} + \bar{\beta}\right)  \|\Delta x_{k+1}\| + \frac{L_h M^{F}L^{F}}{2} \|\Delta x_{k+1}\|^2,
        \end{split}
    \end{equation}
        where the third inequality follows from the following relation for $u= F(x_{k}) \in \mathbb{R}^m$, and $v = u - \frac{1}{L_h} \nabla h(u)$
    \[
        h^* - h(u) \leq h(v) - h(u)
        \leq \langle \nabla h(u), v - u \rangle + \frac{L_ h}{2} \norm{v - u}^2 = -\frac{1}{2L_h} \norm{\nabla h(u)}^2,
    \]
    and $M^F:=\max_{i\in\{1, \ldots, n_{\text{block}}\}} M^F_{i}$, and the fourth inequality follows from the fact that \( \varphi(x) = f(x) + h(F(x)) + g(x) \) and \( \varphi^* = h^* + f^* + g^* \), where \( h^*, g^*, \) and \( f^* \) are the lower bounds of \( h(x) \), \( g(x) \), and \( f(x) \), respectively, as defined in \cref{ass:1}.
    This proves our claim. \hfill\qed
\end{proof}
\subsection{Complexity of monotone LiBCoD}
In this section, we analyze the total computational complexity of Monotone LiBCoD \ref{alg:libcod}, expressed in terms of Jacobian evaluations (also referred to as iteration complexity).
Before deriving the iteration complexity, we first establish the asymptotic convergence of Monotone LiBCoD \ref{alg:libcod} in expectation.
\begin{proposition}[Asymptotic convergence]
    \label{propos:asymtotic:monotone}
    Let $(x_k)_{k\geq 0}$ be generated by Monotone LiBCoD \ref{alg:libcod}. If \cref{ass:1} holds on a compact set to which the iterates belong, then we have 
    \[
        \lim_{k\to \infty} \E \left[\dist(0, \partial\varphi(x_{k})) \right] = 0,
    \]
\end{proposition}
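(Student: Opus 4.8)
\medskip

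\noindent\emph{Proof sketch.} The plan is to upgrade the single‑block subgradient estimate of Lemma~\ref{bound_subgradient} into a recursion for the full stationarity measure $u_k:=\E[\dist(0,\partial\varphi(x_k))]$ that contracts by a factor $\sqrt{1-p_{\min}}<1$ up to an additive term that vanishes because the steps are square‑summable. First I would record two facts. From the acceptance test of Monotone LiBCoD~\ref{alg:libcod} (equivalently, \eqref{decrease}) together with $\beta_{k+1}\ge\beta_{\min}/2$ and $\varphi\ge\varphi^*$, telescoping $\varphi(x_{k+1})\le\varphi(x_k)-\tfrac{\beta_{\min}}{4}\norm{\Delta x_{k+1}}^2$ yields $\sum_{k\ge0}\norm{\Delta x_{k+1}}^2\le\tfrac{4}{\beta_{\min}}(\varphi(x_0)-\varphi^*)<\infty$ on every sample path; hence $\sum_k\E[\norm{\Delta x_{k+1}}^2]<\infty$, so $\E[\norm{\Delta x_{k+1}}^2]\to0$. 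Moreover, since $f+h\circ F$ is $C^1$ and $g=\sum_i g_i$ is block separable, the sum rule gives $\partial\varphi(x)=\nabla(f+h\circ F)(x)+\partial g_1(x^1)\times\dots\times\partial g_{n_{\text{block}}}(x^{n_{\text{block}}})$, so
\[
    \dist(0,\partial\varphi(x))^2=\sum_{i=1}^{n_{\text{block}}}\dist(0,\partial_i\varphi(x))^2 .
\]

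\noindent Next I would estimate each block of $\partial\varphi(x_{k+1})$. For the active block, Lemma~\ref{bound_subgradient} and the uniform bound $\beta_{k+1}\le\bar\beta$ from \eqref{bar_gamma} give $\dist(0,\partial_{i_k}\varphi(x_{k+1}))\le C_1\norm{\Delta x_{k+1}}+C_2\norm{\Delta x_{k+1}}^2$ with $C_1,C_2$ independent of $k$. For an inactive block $j\ne i_k$ one has $x_{k+1}^j=x_k^j$, hence $\partial g_j(x_{k+1}^j)=\partial g_j(x_k^j)$; since $x\mapsto\dist(x,A)$ is $1$‑Lipschitz and, by \cref{ass:1}, the maps $x\mapsto\nabla_j f(x)$ and $x\mapsto\nabla_j F(x)^{T}\nabla h(F(x))$ are Lipschitz on $\mathcal{C}$ (combining the blockwise Lipschitzness of $\nabla f$ and $\nabla F_l$, the Lipschitzness of $\nabla h$, and the boundedness of $\nabla h(F(\cdot))$ and $\nabla_j F(\cdot)$ on the compact set $\mathcal{C}$), this gives $\dist(0,\partial_j\varphi(x_{k+1}))\le\dist(0,\partial_j\varphi(x_k))+C_3\norm{\Delta x_{k+1}}$ for a uniform $C_3$. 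I expect this inactive‑block estimate to be the delicate point: for nonsmooth $g_j$ one cannot relate $\partial g_j$ at two nearby points in general, and what rescues the argument is exactly that the inactive coordinates do not move, so only the (Lipschitz) smooth gradient is perturbed.

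\noindent To assemble the recursion, write $a_k:=\dist(0,\partial\varphi(x_k))$ and $\delta_k:=\norm{\Delta x_{k+1}}$, which is bounded since the iterates lie in $\mathcal{C}$. Combining the blockwise decomposition with the two block estimates, the inequality $\sqrt{s+t}\le\sqrt s+\sqrt t$, and Minkowski's inequality over the $n_{\text{block}}-1$ inactive blocks,
\[
    a_{k+1}\le\bigl(a_k^2-\dist(0,\partial_{i_k}\varphi(x_k))^2\bigr)^{1/2}+C_4\,\delta_k ,
\]
where $C_4$ absorbs $C_1,C_3,\sqrt{n_{\text{block}}}$ and (via $\delta_k$ bounded) $C_2$. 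Conditioning on the history $\mathcal{F}_k$ generated by $i_0,\dots,i_{k-1}$ and applying Jensen's inequality to the concave map $t\mapsto\sqrt t$ together with $p_{i_k}\ge p_{\min}$,
\[
    \E\bigl[(a_k^2-\dist(0,\partial_{i_k}\varphi(x_k))^2)^{1/2}\mid\mathcal{F}_k\bigr]\le\Bigl(a_k^2-\sum_i p_i\dist(0,\partial_i\varphi(x_k))^2\Bigr)^{1/2}\le\sqrt{1-p_{\min}}\;a_k .
\]

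\noindent Taking full expectations and using $\E[\delta_k]\le(\E[\delta_k^2])^{1/2}$ gives $u_{k+1}\le\sqrt{1-p_{\min}}\,u_k+C_4(\E[\delta_k^2])^{1/2}$. Since $\sqrt{1-p_{\min}}<1$, $u_0=\dist(0,\partial\varphi(x_0))<\infty$ (as $x_0$ lies in the domain of $\partial g$), and $(\E[\delta_k^2])^{1/2}\to0$ by the first paragraph, the standard fact that $u_{k+1}\le\rho u_k+\epsilon_k$ with $\rho\in[0,1)$ and $\epsilon_k\to0$ forces $u_k\to0$, which is the claim. Apart from the inactive‑block estimate, the only other point requiring care is that the randomization must be passed through the concave square root via Jensen rather than through $a_k^2$; squaring would only produce a factor $1+\mathcal{O}(1)$ and destroy the contraction. \hfill$\square$
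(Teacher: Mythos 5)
Your proof is correct, but it follows a genuinely different route from the paper's. The paper's argument is shorter: after the same telescoping step giving $\E[\norm{\Delta x_{k+1}}^2]\to 0$, it bounds the \emph{full} distance directly via $\dist(0,\partial\varphi(x_{k+1}))\le\sum_i\dist(0,\partial_i\varphi(x_{k+1}))\le\tfrac{1}{p_{\min}}\E_k\!\left[\dist(0,\partial_{i_k}\varphi(x_{k+1}))\right]$ and then invokes \cref{bound_subgradient} for the active block only, so no inactive-block estimate, no recursion, and no finiteness of $\dist(0,\partial\varphi(x_0))$ are needed; the price is that this chain silently treats the realized $x_{k+1}$ and the block-conditional expectation interchangeably. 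You instead keep the Euclidean block decomposition of $\dist^2$, control the active block by \cref{bound_subgradient}, control each frozen block by a Lipschitz perturbation of the smooth gradient (legitimate precisely because those coordinates do not move, and because $\nabla_j f$, $\nabla_j F$ and $\nabla h(F(\cdot))$ are Lipschitz and bounded on the compact set containing the iterates --- the same ingredients the paper already uses to define $M^F$), and then pass the randomization through the concave square root via conditional Jensen to get the contraction $u_{k+1}\le\sqrt{1-p_{\min}}\,u_k+C\,(\E[\norm{\Delta x_{k+1}}^2])^{1/2}$, from which the claim follows by the standard perturbed-contraction lemma. What your route buys: the realized iterate is tracked throughout, so the delicate identification step above is never needed, and you obtain a quantitative geometric decay of $\E[\dist(0,\partial\varphi(x_k))]$ toward the vanishing noise level. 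What it costs: the extra inactive-block estimate, the assumption $\dist(0,\partial\varphi(x_0))<\infty$ (mild, and implicitly needed for the statement to be meaningful anyway), and a somewhat longer argument than the paper's few-line bound; your constants also pick up a factor $\sqrt{n_{\text{block}}}$ absent from the paper's $1/p_{\min}$ bound, which is irrelevant for the asymptotic statement.
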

\begin{proof}
     Let $k\geq0$. From Monotone LiBCoD \ref{alg:libcod}, we have
    \begin{align*}
        \frac{\beta_{\min}}{2}\|x_{k+1} - x_k\|^{2}\leq \frac{\beta_{k+1}}{2}\|x_{k+1} - x_k\|^{2}   \leq  \varphi(x_k) - \varphi(x_{k+1}).
    \end{align*}
    Taking the expectation, we obtain
    \begin{align*}
        \frac{\beta_{\min}}{2} \E\left[\|x_{k+1} - x_k\|^{2}\right]  \leq  \E\left[\varphi(x_k)\right] - \E[\varphi(x_{k+1})].
    \end{align*}
    Summing the above inequality from $k=0$ to $K\geq 0$, we get
    \begin{align*}
        \frac{\beta_{\min}}{2 }\sum_{k=0}^{K}\E\left[\|x_{k+1} - x_k\|^{2}\right]  &\leq  \sum_{k=0}^{K}\left(\E\left[\varphi(x_k)\right] - \E[\varphi(x_{k+1})]\right) \\
        & = \E\left[\varphi(x_0)\right] - \E[\varphi(x_{K+1})]\\
        & \leq \varphi(x_0) - \varphi^*.
        \end{align*}
    Taking the limit $K \to \infty$, we get
    \begin{align*}
        \frac{\beta_{\min}}{2 }\sum_{k=0}^{\infty}\E\left[\|x_{k+1} - x_k\|^{2}\right]  \leq \varphi(x_0) - \varphi^* < \infty.
    \end{align*}
    Since $\beta_{\min} > 0$, it follows that
    \begin{equation}\label{zero_limit}
        \lim_{k\to\infty}{\E\left[\|x_{k+1} - x_{k}\|^2\right]}=0 \quad \text{ and } \quad \lim_{k\to\infty}{\E\left[\|x_{k+1} - x_{k}\|\right]}=0, 
    \end{equation}
    where the limit on the right-hand side of \eqref{zero_limit} follows from Jensen's inequality. For simplicity, we denote $\E_k[\cdot] =  \E[\cdot|x_k]$.  On the other, using \ref{bound_subgradient}, we deduce
      \allowdisplaybreaks
    \begin{equation}
        \label{subgrad_bound_asymp}
        \begin{split}
            &\dist(0, \partial\varphi(x_{k+1}))
            \leq
            \sum_{i=1}^{n_{\text{block}}}{\dist \left(0, \partial_i\varphi(x_{k+1})\right)}
            \\
            &
            \leq
            \frac{1}{p_{\min}}\sum_{i=1}^{n_{\text{block}}}{p_i \; \dist \left(0, \partial_{i_k}\varphi(x_{k+1})\right)}
            \\
            &
            = \frac1p_{\min}  \E_{k}\left[\dist \left(0, \partial_{i_k}\varphi(x_{k+1})\right)\right]
            \\
            &
            \overset{\eqref{bounde_gradient1}}{\leq }
            \frac1p_{\min}\E_{k}\left[  \left( L^f_{i_k} + L^F_{i_k} \sqrt{2L_h} \sqrt{ \left(\varphi(x_0) - \varphi^*\right)} + \beta_{k+1}\right)  \|\Delta x_{k+1}\| \right.\\
            \MoveEqLeft[-1] \left. + \frac{L_h M^{F}_{i_k}L^{F}_{i_k}}{2} \|\Delta x_{k+1}\|^2 \right]
            \\
            &
            \overset{\eqref{bar_gamma}}{\leq}
            \frac{1}{p_{\min}} \left( L^f + L^F \sqrt{2L_h} \sqrt{ \left(\varphi(x_0) - \varphi^*\right)} + \bar{\beta} \right)  \E_{k}\left[\|\Delta x_{k+1}\|\right]\\
             \MoveEqLeft[-1] + \frac{L_h M^F L^{F}}{2p_{\min}} \E_{k}\left[\|\Delta x_{k+1}\|^2 \right].
        \end{split}
    \end{equation}
  Taking expectation in \eqref{subgrad_bound_asymp}, we obtain
    \begin{align*}
            \E\left[\dist(0, \partial\varphi(x_{k+1}))\right] 
            &\leq
            \frac{1}{p_{\min}} \left( L^f \!+\! L^F \sqrt{2L_h} \sqrt{ \left(\varphi(x_0) \!-\! \varphi^*\right)} + \bar{\beta} \right)  \E\left[\|\Delta x_{k+1}\|\right]\\
             \MoveEqLeft[-1] +  \frac{L_h M^F L^{F}}{2p_{\min}} \E\left[\|\Delta x_{k+1}\|^2 \right]
    \end{align*}
    Passing to the limit and using \eqref{zero_limit}, we get
      $  \lim_{k\to \infty} \E\left[\dist(0, \partial\varphi(x_{k})) \right] = 0.$ \hfill\qed
\end{proof}
We drive next the complexity of Monotone LiBCoD \ref{alg:libcod} to an approximate stationary point of problem \eqref{eq:problem}.
\begin{theorem}[Iteration complexity]
    \label{thm:convergence:monotone}
    Let $(x_k)_{k\geq 0}$ be generated by Monotone LiBCoD \ref{alg:libcod}. If \cref{ass:1} holds on a compact set to which the iterates belong, then, for any $\epsilon>0$,  Monotone LiBCoD \ref{alg:libcod} yields, in expectation, an $\epsilon$-stationary point of \eqref{eq:problem}, i.e.,
    \[
     \E\left[\dist(0, \partial\varphi(x_{k+1})) \right] \leq \epsilon,
    \]
    within  
    \begin{gather*}
        \left(\frac{4\left(L^f+ L^F \sqrt{2L_h} \sqrt{\varphi(x_0)- \varphi^*} + \bar{\beta}\right)^2}{p_{\min} \beta_{\min} \epsilon^2} + \frac{2 L_h M^F L^F}{p_{\min} \beta_{\min} \epsilon} \right)\left(\varphi(x_0) - \varphi^*\right)
    \end{gather*}  
    iterations.
\end{theorem}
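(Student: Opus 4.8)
\emph{Proof strategy.}
The plan is to make the argument of Proposition~\ref{propos:asymtotic:monotone} quantitative, feeding the summable-displacements estimate into the stationarity bound of Lemma~\ref{bound_subgradient}. First I would sum the sufficient decrease: since every accepted step size satisfies $\beta_{k+1}\ge\beta_{\min}$, inequality~\eqref{decrease} gives $\tfrac{\beta_{\min}}{2}\|\Delta x_{k+1}\|^2\le\varphi(x_k)-\varphi(x_{k+1})$, so taking expectations and telescoping over $k=0,\dots,K$ produces
\[
  \frac{\beta_{\min}}{2}\sum_{k=0}^{K}\E\!\left[\|\Delta x_{k+1}\|^2\right]\le\varphi(x_0)-\varphi^*.
\]
In particular, the index $k^\star\in\{0,\dots,K\}$ minimising $\E[\|\Delta x_{k+1}\|^2]$ satisfies $\E[\|\Delta x_{k^\star+1}\|^2]\le\delta_K^2$, where $\delta_K^2:=\tfrac{2(\varphi(x_0)-\varphi^*)}{\beta_{\min}(K+1)}$.

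Next I would reuse the chain of inequalities in~\eqref{subgrad_bound_asymp}: summing~\eqref{bounde_gradient1} over the blocks, using $p_i\ge p_{\min}$ to pass from the block sum to a conditional expectation over the random index $i_k$, bounding $\beta_{k+1}\le\bar\beta$ via~\eqref{bar_gamma}, and then taking total expectation gives, for every $k$,
\[
  \E\!\left[\dist(0,\partial\varphi(x_{k+1}))\right]\le\frac{C_1}{p_{\min}}\,\E\!\left[\|\Delta x_{k+1}\|\right]+\frac{L_h M^F L^F}{2p_{\min}}\,\E\!\left[\|\Delta x_{k+1}\|^2\right],
\]
with $C_1:=L^f+L^F\sqrt{2L_h}\sqrt{\varphi(x_0)-\varphi^*}+\bar\beta$. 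Jensen's inequality yields $\E[\|\Delta x_{k+1}\|]\le\sqrt{\E[\|\Delta x_{k+1}\|^2]}$, so at $k=k^\star$ I obtain $\E[\dist(0,\partial\varphi(x_{k^\star+1}))]\le\tfrac{C_1}{p_{\min}}\delta_K+\tfrac{L_hM^FL^F}{2p_{\min}}\delta_K^2$. It then only remains to choose $K$ so that the right-hand side is at most $\epsilon$: inserting the value of $\delta_K^2$ and forcing the linear and the quadratic contributions below $\epsilon$ separately (i.e.\ solving the resulting quadratic inequality in $\delta_K$) gives the stated iteration count, the $\mathcal{O}(1/\epsilon^2)$ term stemming from the linear part after Jensen and the $\mathcal{O}(1/\epsilon)$ term from the quadratic part, which is already of order $1/(K+1)$.

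I expect the difficulty to lie in the bookkeeping rather than in any single idea. One must resist collapsing $\|\Delta x_{k+1}\|^2$ into $\|\Delta x_{k+1}\|$ — there is no uniform diameter bound on the compact set $\mathcal{C}$ in the statement — and instead carry the quadratic term through to the end, where it only costs the harmless $1/\epsilon$ order; and one must track the randomness carefully, inflating the single-block estimate of Lemma~\ref{bound_subgradient} by the $1/p_{\min}$ factor to control the full limiting subdifferential and conditioning on $x_k$ before taking the outer expectation, exactly as in~\eqref{subgrad_bound_asymp}. The telescoping, the pigeonhole selection of $k^\star$, Jensen's inequality, and solving for $K$ are then routine.
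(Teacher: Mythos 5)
Your overall strategy (telescope the sufficient decrease, pick a best iterate by pigeonhole, feed it into Lemma~\ref{bound_subgradient} with the $1/p_{\min}$ inflation and Jensen) is sound and does give an $\mathcal{O}(1/\epsilon^2)$ rate, but it is a genuinely different argument from the paper's and, as written, it does not produce the bound stated in the theorem. The problem is the order in which you apply Jensen and the $1/p_{\min}$ factor: from
\[
\E\!\left[\dist(0,\partial\varphi(x_{k^\star+1}))\right]\le\frac{C_1}{p_{\min}}\,\delta_K+\frac{L_hM^FL^F}{2p_{\min}}\,\delta_K^2,
\qquad \delta_K^2=\frac{2(\varphi(x_0)-\varphi^*)}{\beta_{\min}(K+1)},
\]
forcing the linear term below (a fraction of) $\epsilon$ requires $\delta_K\lesssim p_{\min}\epsilon/C_1$, i.e.\ $K+1\gtrsim C_1^2(\varphi(x_0)-\varphi^*)/(p_{\min}^2\beta_{\min}\epsilon^2)$: the dominant term carries $1/p_{\min}^2$, whereas the theorem claims $1/p_{\min}$. (Also, requiring each contribution to be "below $\epsilon$ separately" only bounds the sum by $2\epsilon$; you would need $\epsilon/2$ each, costing further constant factors.) The paper avoids the squared $p_{\min}$ by never bounding $\E[\dist]$ through $\E\|\Delta x_{k+1}\|$: it takes $N$ to be the first index with $\E[\dist(0,\partial\varphi(x_{N+1}))]\le\epsilon$, splits the earlier iterations into those where the linear term of \eqref{bounde_gradient1} dominates (Case 1) and those where the quadratic term dominates (Case 2), and in Case 1 works with \emph{squared} distances, using $\dist^2(0,\partial\varphi)=\sum_i\dist^2(0,\partial_i\varphi)\le\frac1{p_{\min}}\E_k[\dist^2(0,\partial_{i_k}\varphi)]$ together with $\epsilon^2<(\E[\dist])^2\le\E[\dist^2]$; the $1/p_{\min}$ then enters only linearly, and counting $|\mathcal{I}_1|$ and $|\mathcal{I}_2|$ via the telescoped decrease gives exactly the two stated terms.

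Your argument can be repaired while keeping the best-iterate structure: bound $\E[\dist^2(0,\partial\varphi(x_{k+1}))]$ directly with a single $1/p_{\min}$, handle the resulting $\|\Delta x_{k+1}\|^3$ and $\|\Delta x_{k+1}\|^4$ terms either by the paper's per-iteration case split or by the uniform bound $\|\Delta x_{k+1}\|^2\le\frac{2}{\beta_{\min}}(\varphi(x_0)-\varphi^*)$ that follows from \eqref{decrease}, and only at the end use $\E[\dist]\le\sqrt{\E[\dist^2]}$. On that last point, your aside that there is no uniform bound on the steps is incorrect: both the compactness of $\mathcal{C}$ and the monotone decrease \eqref{decrease} give one; it simply is not needed in the paper's route. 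As it stands, your proposal proves the correct order in $\epsilon$ but a strictly weaker complexity bound than the one claimed, so it does not prove the theorem as stated without the modification above.
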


\begin{proof}
    From \cref{propos:asymtotic:monotone}, we know that  $\E\left[\dist(0, \partial\varphi(x_{k})) \right] \to 0$ as $k\to \infty$. Let $\epsilon>0$ and let $N\geq 0$ be the first integer such that
    \begin{equation}
    \E\left[\dist(0, \partial\varphi(x_{N+1})) \right] \leq \epsilon.
    \end{equation}
    Using  \eqref{bounde_gradient1} from  \cref{bound_subgradient}, it follows that at each iteration, $k \in [0:N]$, we encounter one of the following two cases:\\
    \noindent \textit{Case 1: } If the following holds
    \[
        \left(L^f+ L^F \sqrt{2L_h} \sqrt{\varphi(x_0)- \varphi^*} + \bar{\beta}\right)\|\Delta x_{k+1}\|\geq \frac{L_h M^F L^F}{2}\|\Delta x_{k+1}\|^2,
    \]
    then we get
    \begin{align} \label{case1}
        & \dist^2\left(0, \partial_{i_k} \varphi (x_{k+1}) \right) \leq 2 \left(L^f+ L^F \sqrt{2L_h} \sqrt{\varphi(x_0)- \varphi^*} + \bar{\beta}\right)^2\|\Delta x_{k+1}\|^2 \nonumber\\
        &\overset{\eqref{decrease}}{\leq} \frac{4\left(L^f+ L^F \sqrt{2L_h} \sqrt{\varphi(x_0)- \varphi^*} + \bar{\beta}\right)^2}{\beta_{k+1}}\left(\varphi(x_{k}) - \varphi(x_{k+1})\right).
    \end{align}
    \noindent \textit{Case 2: } Otherwise,  the following is valid 
    \[
        \left(L^f+ L^F \sqrt{2L_h} \sqrt{\varphi(x_0)- \varphi^*} + \bar{\beta}\right)\|\Delta x_{k+1}\|< \frac{L_h M^F L^F}{2}\|\Delta x_{k+1}\|^2,
    \]
    which yields
    \begin{equation}\label{case2}
        \dist\left(0, \partial_{i_k} \varphi (x_{k+1}) \right) \leq L_h M^F L^F \|\Delta x_{k+1}\|^2\overset{\eqref{decrease}}{\leq} \frac{2 L_h M^F L^F}{\beta_{k+1}}\left(\varphi(x_{k}) - \varphi(x_{k+1})\right).
    \end{equation}
    Define $\mathcal{I}_1$ as the set of iterations $k \in [0:N-1]$ at which   \textit{Case 1} holds, and  $\mathcal{I}_2$ as the set of iterations $k\in [0:N-1]$ at which   \textit{Case 2} holds. Clearly: $N=|\mathcal{I}_1|+|\mathcal{I}_2|+1$. We first derive an upper bound for $|\mathcal{I}_1|$.  Let $k \in \mathcal{I}_1$, we have
    \allowdisplaybreaks
    \begin{equation}
        \label{criticality1_squared}
        \begin{split}
        &\dist^2(0, \partial\varphi(x_{k+1}))
        =
        \sum_{i=1}^{n_{\text{block}}}{\dist^2 \left(0, \partial_i\varphi(x_{k+1})\right)}
        \\
        &
        \leq
        \frac{1}{p_{\min}}\sum_{i=1}^{n_{\text{block}}}{p_i \; \dist^2 \left(0, \partial_i\varphi(x_{k+1})\right)}
        \\
        &
        = \frac1p_{\min}  \E_{k}\left[\dist^2 \left(0, \partial_{i_k}\varphi(x_{k+1})\right)\right]
        \\
        &
        \overset{\eqref{case1} }{\leq }
        \frac1p_{\min}  \frac{4\left(L^f+ L^F \sqrt{2L_h} \sqrt{\varphi(x_0)- \varphi^*} + \bar{\beta}\right)^2}{\beta_{k+1}}      \E_{k} \left[\varphi(x_{k}) - \varphi(x_{k+1}) \right]
        \\
        &
        \leq
        \frac{4\left(L^f+ L^F \sqrt{2L_h} \sqrt{\varphi(x_0)- \varphi^*} + \bar{\beta}\right)^2}{p_{\min} \beta_{\min}}      
        \left(\varphi(x_k) - \E_{k} [\varphi(x_{k+1})]\right).
        \end{split}
    \end{equation}
    Taking the expectation in the inequality \eqref{criticality1_squared} and summing it over $\mathcal{I}_1$ yields
    \begin{align*}
        |\mathcal{I}_1|\epsilon^2  & < \sum_{k\in\mathcal{I}_1} \left(\E\left[\dist(0, \partial\varphi(x_{k+1}))\right]\right)^2\leq \sum_{k\in\mathcal{I}_1} \E\left[\dist^2(0, \partial\varphi(x_{k+1}))\right]  \\
        & \leq  \sum_{k\in\mathcal{I}_1}  \frac{4\left(L^f+ L^F \sqrt{2L_h} \sqrt{\varphi(x_0)- \varphi^*} + \bar{\beta}\right)^2}{p_{\min} \beta_{\min}}      
        \E\left[\varphi(x_k) - \E_{k} [\varphi(x_{k+1})]\right] \\
        & = \frac{4\left(L^f+ L^F \sqrt{2L_h} \sqrt{\varphi(x_0)- \varphi^*} + \bar{\beta}\right)^2}{p_{\min} \beta_{\min}}   \sum_{k\in\mathcal{I}_1}      
        \left(\E\left[\varphi(x_k)\right] - \E [\varphi(x_{k+1})]\right)\\
        & \overset{\eqref{decrease}}{\leq}  \frac{4\left(L^f+ L^F \sqrt{2L_h} \sqrt{\varphi(x_0)- \varphi^*} + \bar{\beta}\right)^2}{p_{\min} \beta_{\min}}   \sum_{k=0}^{N-1}      
        \left(\E\left[\varphi(x_k)\right] - \E [\varphi(x_{k+1})]\right)  \\
        & = \frac{4\left(L^f+ L^F \sqrt{2L_h} \sqrt{\varphi(x_0)- \varphi^*} + \bar{\beta}\right)^2}{p_{\min} \beta_{\min}}  \left(\E[\varphi(x_0)] - \E[\varphi(x_N)]\right)  \\ 
        & \leq \frac{4\left(L^f+ L^F \sqrt{2L_h} \sqrt{\varphi(x_0)- \varphi^*} + \bar{\beta}\right)^2}{p_{\min} \beta_{\min}}  \left(\varphi(x_0) - \varphi^*\right) .
    \end{align*}
    Thus, we have: $|\mathcal{I}_1|< \frac{4\left(L^f+ L^F \sqrt{2L_h} \sqrt{\varphi(x_0)- \varphi^*} + \bar{\beta}\right)^2}{p_{\min} \beta_{\min} \epsilon^2}  \left(\varphi(x_0) - \varphi^*\right)$. Similarly, we derive an upper bound for $|\mathcal{I}_2|$. Let $k\in \mathcal{I}_2$, we have
    \allowdisplaybreaks
    \begin{equation}
        \label{criticality1}
        \begin{split}
        \dist(0, \partial\varphi(x_{k+1}))
        &
        \leq
        \sum_{i=1}^{n_{\text{block}}}{\dist(0, \partial_i\varphi(x_{k+1}))}
        \\
        &
        \leq
        \frac{1}{p_{\min}}\sum_{i=1}^{n_{\text{block}}}{p_i \; \dist(0, \partial\varphi_i(x_{k+1}))}
        \\
        &
        = \frac1p_{\min}  \E_{k}\left[\dist(0, \partial\varphi_{i_k}(x_{k+1}))\right]
        \\
        &
        \overset{\eqref{case2} }{\leq }
        \frac1p_{\min}  \frac{2 L_h M^F L^F}{\beta_{k+1}}      \E_{k} \left[\varphi(x_{k}) - \varphi(x_{k+1}) \right]
        \\
        &
        \leq
         \frac{2 L_h M^F L^F}{p_{\min} \beta_{\min}}      
        \left(\varphi(x_k) - \E_{k} [\varphi(x_{k+1})]\right).
        \end{split}
    \end{equation}
    Taking the expectation in the inequality \eqref{criticality1} and summing it over $\mathcal{I}_2$ yields
    \begin{align*}
        &  |\mathcal{I}_2|\epsilon  < \sum_{k\in\mathcal{I}_2} \E\left[\dist(0, \partial\varphi(x_{k+1}))\right]  \leq  \sum_{k\in\mathcal{I}_2} \frac{2 L_h M^F L^F}{p_{\min} \beta_{\min}} \E\left[\varphi(x_k) - \E_{k} [\varphi(x_{k+1})]\right]\\
        &  = \frac{2 L_h M^F L^F}{p_{\min} \beta_{\min}} \sum_{k\in\mathcal{I}_2}  \left(\E\left[\varphi(x_k)\right] - \E [\varphi(x_{k+1})]\right)\\
        & \overset{\eqref{decrease}}{\leq}  \frac{2 L_h M^F L^F}{p_{\min} \beta_{\min}} \sum_{k=0}^{N-1}  \left(\E\left[\varphi(x_k)\right] - \E [\varphi(x_{k+1})]\right)\\
        & \leq  \frac{2 L_h M^F L^F}{p_{\min} \beta_{\min}}   \left(\E\left[\varphi(x_0)\right] - \E [\varphi(x_{N+1})]\right)\leq \frac{2 L_h M^F L^F}{p_{\min} \beta_{\min}}   \left(\varphi(x_0) - \varphi^*\right).
    \end{align*}
    Therefore, we obtain: $|\mathcal{I}_2|< \frac{2 L_h M^F L^F}{p_{\min} \beta_{\min} \epsilon}   \left(\varphi(x_0) - \varphi^*\right) $.  Consequently, we obtain
    \[
        N \leq  \left(\frac{4\left(L^f+ L^F \sqrt{2L_h} \sqrt{\varphi(x_0)- \varphi^*} + \bar{\beta}\right)^2}{p_{\min} \beta_{\min} \epsilon^2}  + \frac{2 L_h M^F L^F}{p_{\min} \beta_{\min} \epsilon} \right)\left(\varphi(x_0) - \varphi^*\right).
        \]
    This proves our claim. \hfill\qed
\end{proof}
\begin{remark}
    If the sampling is deterministic, given the outer semicontinuity and closed-valuedness of subdifferentials of proper and lsc functions (see, e.g., \cite{clasonvalkonen2020nonsmooth}), \ref{propos:asymtotic:monotone} shows that any limiting point $x^*$ of $(x_k)_{k \in \N}$ is a critical point of \eqref{eq:problem} in the sense of Mordukhovich subdifferentials
\end{remark}

\begin{remark}
    The Clarke subdifferential could also be used, if the nonsmooth function $g$ were Lipschitz; the critical features of the chosen nonconvex subdifferential are compatibility with the convex subdifferential on convex functions $g$, compatibility with the Fréchet derivative on the smooth nonconvex function $\ell := f + h \circ F$, a Fermat principle, and the exact sum rule $\partial[\ell+g](x) = \nabla \ell(x)+\partial g(x)$.
    Then any solution $x^*$ of \eqref{eq:problem} must satisfy $0 \in \partial \varphi (x_{k+1})$; see, e.g., \cite{clasonvalkonen2020nonsmooth}.
\end{remark}
Note that if one chooses $\beta_{\min} = \mathcal{O}\left(\sqrt{L_h}\right)$, the iteration complexity required by \ref{alg:libcod} to achieve, in expectation, an $\epsilon$-stationary point of \eqref{eq:problem}, i.e.,  
\[
    \E\left[\dist(0, \partial\varphi(x_{k+1})) \right] \leq \epsilon,
\]  
is given by \( N = \mathcal{O}\left(\frac{\sqrt{L_h}}{\epsilon^{2}}\right) \) iterations. This matches the optimal iteration complexity established for the full-block setting in \cite{marumo2024accelerated}.

\section{Nonmonotone linearized block coordinate descent method}\label{sec4}
In the previous section, we established global convergence to a stationary point of problem \eqref{eq:problem} under the assumption that the iterates are bounded, a condition that can be challenging to verify. Inspired by the works of \cite{kanzow2024convergence,kanzow2022convergence}, this section introduces a nonmonotone variant of the proposed method, which eliminates the need for this boundedness assumption.
We present our nonmonotone LiBCoD method for solving problem \eqref{eq:problem}.

\begin{algorithm*}[ht]
\caption{Nonmonotone LiBCoD}
\label{alg:nonmonotone-libcod}
\SetAlgoNlRelativeSize{-1} 

\KwIn{$x_0$, $\beta_1 \geq \frac{\beta_{\min}}{2}>0$, $p_{\min}, u_{\min} \in (0,1)$, and $\mathcal{R}_0 = \varphi(x_0)$}
Set $k = 0$

\While{termination criterion not satisfied}{
  Pick a random index $i_k$ with probability $p_{i_k} \geq p_{\min}$

    \Repeat(
    ){
    \textcolor{blue}{$\varphi(x_{k+1}) \leq  \mathcal{R}_{k} - \frac{\beta_{k+1}}{2}\|x_{k+1} - x_k\|^{2}$}
  }
  {
    $\beta_{k+1} := 2\beta_{k+1}$
    
    $s_{k+1} := \arg\min\limits_{s \in \mathbb{R}^{n_{i_k}}} \resizebox{!}{2ex}{$\langle\nabla_{i_k} f(x_k),s-x_k^{i_k}\rangle + h\left(F(x_k)+\langle\nabla_{i_k} F(x_k),s-x_k^{i_k}\rangle\right) + g_{i_k}(s) + \frac{\beta_{k+1}}{2}\|s - x^{i_k}\|^2$}$ 

    $x^{i_k}_{k+1} := s_{k+1}$ and $x^{i}_{k+1} = x_k^i$ for $i \neq i_k$
  }\label{line:line_search_nonm}

  Choose $u_{k+1} \in (u_{\min}, 1)$ and set $\mathcal{R}_{k+1} = (1 - u_{k+1}) \mathcal{R}_k + u_{k+1} \varphi(x_{k+1})$

  Set $k = k + 1$

  $\beta_{k+1} = \max\left\{\frac{\beta_k}{4}, \frac{\beta_{\min}}{2}\right\}$
}
\end{algorithm*}

\noindent Note that the subproblem in Nonmonotone LiBCoD \ref{alg:nonmonotone-libcod} has the same structure as that in the monotone LiBCoD \ref{alg:libcod}. Consequently, solving this subproblem involves the same computational complexity as in the monotone variant. However, unlike Monotone LiBCoD \ref{alg:libcod}, a strict decrease in the objective function \(\varphi(\cdot)\) along the iterates is not required in the nonmonotone case (see Nonmonotone LiBCoD \ref{alg:nonmonotone-libcod}). This relaxation may lead to smaller regularization parameters \(\beta_{k+1}\), which, in turn, can enable larger step sizes compared to the monotone case.
 In the next lemma, we show that the inner process, in Nonmonotone LiBCoD \ref{alg:nonmonotone-libcod}, is well defined.
 
\begin{lemma}
    \label{th:kkt}
    Let \cref{ass:1} be satisfied and let $(x_k)_{k\geq 0}$ be generated by Nonmonotone LiBCoD \ref{alg:nonmonotone-libcod} and assume that $g$ is bounded from below by an affine function. Then, the following statements hold for all $k\geq 0$:
    \begin{enumerate}
        \item The inner process in Nonmonotone LiBCoD \ref{alg:nonmonotone-libcod} is well defined.
        \item $\mathcal{R}_k \geq \varphi(x_k)$.
    \end{enumerate}
\end{lemma}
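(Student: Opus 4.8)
The plan is to prove both items jointly by induction on $k$, using item~2, that is $\mathcal{R}_k \ge \varphi(x_k)$, as the induction invariant. The base case $k=0$ holds since $\mathcal{R}_0 = \varphi(x_0)$ by initialization. For the inductive step, assume $\mathcal{R}_k \ge \varphi(x_k)$; one must first establish item~1 at iteration $k$ (finite termination of the inner repeat--until loop, so that $x_{k+1}$ is well defined), and then deduce $\mathcal{R}_{k+1} \ge \varphi(x_{k+1})$. Granting item~1, item~2 at $k+1$ is immediate: on exit the loop guarantees $\varphi(x_{k+1}) \le \mathcal{R}_k - \tfrac{\beta_{k+1}}{2}\|\Delta x_{k+1}\|^2 \le \mathcal{R}_k$, so, since $u_{k+1}\in(u_{\min},1)\subset(0,1)$,
\[
    \mathcal{R}_{k+1} - \varphi(x_{k+1}) = (1-u_{k+1})\bigl(\mathcal{R}_k - \varphi(x_{k+1})\bigr) \ge 0,
\]
which proves item~2 and restores the invariant.

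The substance is item~1. The subproblem defining $s_{k+1}$ minimizes a proper, lsc, $\beta_{k+1}$-strongly convex function ($h$ convex precomposed with the affine map $l^{i_k}_F(\cdot\,;x_k^{i_k})$, plus the convex $g_{i_k}$, plus the quadratic penalty); its objective is coercive because the quadratic term dominates the affine $l^{i_k}_f(\cdot\,;x_k^{i_k})$, the at worst linearly decreasing term $h\bigl(l^{i_k}_F(\cdot\,;x_k^{i_k})\bigr)$ (by convexity of $h$, or $h\ge h^*$), and the assumed affine lower bound on $g$. Hence $s_{k+1}$ exists and is unique. Comparing the subproblem value at $s_{k+1}$ with its value at $x_k^{i_k}$ (the latter independent of $\beta_{k+1}$) and invoking the same lower bounds yields an estimate $\tfrac{\beta_{k+1}}{2}\|\Delta x_{k+1}\|^2 \le C_1 + C_2\|\Delta x_{k+1}\|$ with $C_1, C_2$ depending only on $x_k$; hence $\|\Delta x_{k+1}\| = \mathcal{O}(\beta_{k+1}^{-1/2}) \to 0$ as the inner loop doubles $\beta_{k+1}$. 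In particular there is a finite $\beta_0$ such that for $\beta_{k+1} \ge \beta_0$ every candidate iterate lies in the compact ball $B := \{x : \|x - x_k\| \le 1\}$.

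On $B$, \cref{ass:1} holds with finite local constants $L^f_{i_k}(B), L^F_{i_k}(B)$, and $\varphi$ has a finite lower bound $\varphi^*_B$ along the candidate iterates (from $h \ge h^*$ globally, continuity of $f$ on the compact $B$, and lower semicontinuity of $g_{i_k}$ on $B$). Replaying the sufficient-decrease computation of Lemma~\ref{lemma1} on $B$, with $\varphi^*$ replaced by $\varphi^*_B$, gives $\varphi(x_{k+1}) \le \varphi(x_k) - \tfrac{\beta_{k+1}}{2}\|\Delta x_{k+1}\|^2$ as soon as $\beta_{k+1} \ge L^f_{i_k}(B) + L^F_{i_k}(B)\sqrt{2L_h}\sqrt{\varphi(x_k) - \varphi^*_B}$. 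Since $\beta_{k+1}$ doubles on each pass, both this threshold and $\beta_0$ are exceeded after finitely many passes, and then the induction hypothesis $\varphi(x_k) \le \mathcal{R}_k$ yields $\varphi(x_{k+1}) \le \mathcal{R}_k - \tfrac{\beta_{k+1}}{2}\|\Delta x_{k+1}\|^2$: the termination test fires. This proves item~1 and closes the induction.

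The main obstacle is the absence of a global compact set containing the iterates, which rules out a direct appeal to Lemma~\ref{lemma1}. The way around it is the observation above that, for fixed $k$, the inner-loop candidates cannot escape a compact neighborhood of $x_k$ as $\beta_{k+1} \to \infty$, which lets one localize both \cref{ass:1} and the descent estimate to that neighborhood; the affine lower bound on $g$ is exactly what makes the subproblem well posed and coercive and forces the $\mathcal{O}(\beta_{k+1}^{-1/2})$ shrinkage.
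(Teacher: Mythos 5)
Your proof is correct, and it is worth noting that the paper itself gives no argument for this lemma: it only remarks that the proof ``follows similar reasoning as in'' the cited nonmonotone proximal-gradient works of Kanzow--Mehlitz and De Marchi. Your write-up therefore supplies the missing details, and it does so along the same general lines as those references: a joint induction whose invariant is $\mathcal{R}_k \ge \varphi(x_k)$, with item~2 at step $k{+}1$ an immediate consequence of the exit test and the convex-combination update of $\mathcal{R}_{k+1}$, and with the substance concentrated in finite termination of the inner loop. Your two key ingredients there are exactly the right ones: (i) existence of $s_{k+1}$ and the bound $\tfrac{\beta_{k+1}}{2}\|\Delta x_{k+1}\|^2 \le C_1 + C_2\|\Delta x_{k+1}\|$ from comparing the subproblem value at $s_{k+1}$ with that at $x_k^{i_k}$, using $h \ge h^*$ and the affine minorant of $g$ (which a proper lsc convex $g$ in fact possesses automatically), forcing $\|\Delta x_{k+1}\| = \mathcal{O}(\beta_{k+1}^{-1/2})$ so that all candidates eventually lie in the convex compact ball $B$ around $x_k$; and (ii) replaying the Lemma~\ref{lemma1} computation with constants local to $B$ (the convexity of $B$ is what legitimizes the block descent-lemma steps), so that doubling $\beta_{k+1}$ eventually exceeds the local threshold and the test fires after combining with the induction hypothesis $\varphi(x_k) \le \mathcal{R}_k$. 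Compared with the paper's bare citation, your argument makes explicit where the affine lower bound on $g$ and the absence of a global compact set are actually used.

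One small caveat you should state explicitly: Assumption~\ref{ass:1} is phrased for ``a given compact set $\mathcal{C}$,'' and your localization applies it to the ball $B$ around the current iterate $x_k$, which need not be contained in any a priori fixed $\mathcal{C}$. You therefore need the (natural, and elsewhere implicitly used) reading that the block-Lipschitz conditions hold on every compact set, i.e., that $\nabla f$ and $\nabla F_j$ are locally Lipschitz; with that reading your proof is complete.
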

The formal proof of the above lemma follows similar reasoning as in \cite{kanzow2022convergence,de2023proximal}. 
To establish the global convergence rate to a stationary point, we first provide several key results. In the following theorem, we demonstrate that the constructed sequence \((\mathcal{R}_k)_{k \geq 0}\) serves as a Lyapunov function, meaning it is a decreasing sequence.
\begin{theorem}
    \label{sec:nonmon:artf_decreas}
    Let $(x_k)_{k\geq 0}$ be generated by Nonmonotone LiBCoD \ref{alg:nonmonotone-libcod} and assume that $g$ is bounded from bellow by an affine function. Then, the following statements hold for all $k\geq 0$
    \begin{enumerate}
        \item The sequence $(\mathcal{R}_k )_{k\geq 0}$ is monotonically decreasing and satisfies:
        \begin{align*}
        \mathcal{R}_{k+1} \leq \mathcal{R}_{k} - \frac{u_{k+1}\beta_{k+1}}{2}\|x_{k+1} - x_k\|^2.
        \end{align*}
        \item For all $k\geq 0$, we have $x_k \in \mathcal{L}_{\varphi}(x_0): = \{x: \varphi(x) \leq \varphi(x_0)\}$.
    \end{enumerate}
\end{theorem}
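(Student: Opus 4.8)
The plan is to prove the two statements in sequence, with statement (1) doing most of the work and statement (2) following as an easy consequence. For statement (1), recall from \cref{line:line_search_nonm} of Nonmonotone LiBCoD \ref{alg:nonmonotone-libcod} that the accepted iterate satisfies $\varphi(x_{k+1}) \leq \mathcal{R}_k - \frac{\beta_{k+1}}{2}\|x_{k+1}-x_k\|^2$; by \cref{th:kkt} this inner loop terminates, so this inequality is genuinely available. Now I would simply substitute it into the definition $\mathcal{R}_{k+1} = (1-u_{k+1})\mathcal{R}_k + u_{k+1}\varphi(x_{k+1})$:
\begin{align*}
    \mathcal{R}_{k+1}
    &= (1-u_{k+1})\mathcal{R}_k + u_{k+1}\varphi(x_{k+1})
    \\
    &\leq (1-u_{k+1})\mathcal{R}_k + u_{k+1}\left(\mathcal{R}_k - \frac{\beta_{k+1}}{2}\|x_{k+1}-x_k\|^2\right)
    \\
    &= \mathcal{R}_k - \frac{u_{k+1}\beta_{k+1}}{2}\|x_{k+1}-x_k\|^2.
\end{align*}
Since $u_{k+1} > u_{\min} > 0$ and $\beta_{k+1} \geq \beta_{\min}/2 > 0$, the subtracted term is nonnegative, which establishes both the displayed inequality and the monotone decrease of $(\mathcal{R}_k)_{k\geq 0}$.

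For statement (2), I would chain the monotonicity just proved with statement (2) of \cref{th:kkt}. Indeed, by \cref{th:kkt}(2) we have $\varphi(x_k) \leq \mathcal{R}_k$ for every $k$, and by the monotonicity from statement (1) we have $\mathcal{R}_k \leq \mathcal{R}_{k-1} \leq \cdots \leq \mathcal{R}_0 = \varphi(x_0)$, where the last equality is the initialization in Nonmonotone LiBCoD \ref{alg:nonmonotone-libcod}. Combining, $\varphi(x_k) \leq \mathcal{R}_k \leq \mathcal{R}_0 = \varphi(x_0)$, i.e., $x_k \in \mathcal{L}_\varphi(x_0)$ for all $k \geq 0$.

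There is essentially no hard part in this argument once \cref{th:kkt} is in hand — the whole point of introducing the averaged reference value $\mathcal{R}_k$ with $u_{k+1}$ bounded away from $0$ and $1$ is precisely to make this telescoping structure work. The only subtlety worth flagging is the logical dependence: statement (1) of \cref{th:kkt} (well-definedness of the inner loop) must be invoked to guarantee that $x_{k+1}$ and the defining inequality exist, and statement (2) of \cref{th:kkt} ($\mathcal{R}_k \geq \varphi(x_k)$) is what converts the Lyapunov decrease into the concrete level-set containment; one should be careful that the well-definedness in \cref{th:kkt} does not itself secretly require boundedness of iterates (it relies instead on $g$ being bounded below by an affine function together with local Lipschitz continuity on the relevant compact set, which is why $\mathcal{L}_\varphi(x_0)$ — shown here to contain the iterates — can serve as that set in a bootstrapping fashion).
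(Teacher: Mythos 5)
Your proposal is correct and follows essentially the same route as the paper: the paper likewise multiplies the accepted line-search inequality by $u_{k+1}$ and combines it with the update $\mathcal{R}_{k+1} = (1-u_{k+1})\mathcal{R}_k + u_{k+1}\varphi(x_{k+1})$ to get the decrease, then derives the level-set containment from $\varphi(x_{k+1}) \leq \mathcal{R}_{k+1} \leq \mathcal{R}_0 = \varphi(x_0)$, i.e., exactly the chain through \cref{th:kkt} that you use. Your remark on the logical dependence on the well-definedness of the inner loop is a fair additional clarification but does not change the substance.
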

\begin{proof}
    From Nonmonotone LiBCoD \ref{alg:nonmonotone-libcod}, we have
    \begin{align*}
      u_{k+1}\varphi(x_{k+1}) \leq  u_{k+1}\mathcal{R}_{k} - \frac{u_{k+1}\beta_{k+1}}{2}\|x_{k+1} - x_k\|^{2}.
    \end{align*}
    Combining this inequality with the definition of $\mathcal{R}_{k+1}$ yields
    \begin{align*}
        \mathcal{R}_{k+1} - (1-u_{k+1})\mathcal{R}_k \leq u_{k+1}\mathcal{R}_{k} - \frac{u_{k+1}\beta_{k+1}}{2}\|x_{k+1} - x_k\|^{2}.
    \end{align*}
    This readily proves the first assertion.
    The second one follows from
    \[
        \varphi(x_{k+1}) \leq \mathcal{R}_{k+1} \leq \mathcal{R}_k - \frac{u_{k+1}\beta_{k+1}}{2}\|x_{k+1} - x_k\|^{2}\leq \mathcal{R}_0 = \varphi(x_0)
    \]
    holding for all $k \ge 0$.  \hfill\qed   
\end{proof}
Assuming that the level set $\mathcal{L}_{\varphi}(x_0)$ is compact, then from the previous theorem, it follows that the iterates, $(x_k)_{k\geq 0}$, generated by nonmonotone LiBCoD method are bounded. Hence there exist $M^{F}_{i}$ such that $\|\nabla_{i}F(x_k)\| \leq M^{F}_{i}$ for all $k\geq 0$ and $i=1.\ldots,m$.  
The following lemma, which is analogous to \ref{bound_subgradient}, demonstrates that the subgradient of $\varphi(\cdot)$ in problem \eqref{eq:problem}, with respect to the chosen block, can be bounded by the difference between consecutive iterates.
\begin{lemma}\label{lem:4}
    Let $(x_k)_{k\geq 0}$ be generated by Nonmonotone LiBCoD \ref{alg:nonmonotone-libcod}. If \cref{ass:1} holds on $\mathcal{L}_{\varphi}(x_0)$, which is assumed to be compact, then we have
    \begin{equation}\label{bounde_gradient2}
        \begin{split}
             \dist\left(0, \partial_{i_k} \varphi (x_{k+1})  \right)
             & \leq  \left( L^f_{i_k} + L^F_{i_k} \sqrt{2L_h} \sqrt{ \varphi(x_0) - \varphi^*} + \beta_{k+1} \right)  \|\Delta x_{k+1}\|\\
             \MoveEqLeft[-1] + \frac{L_h M^{F}_{i_k}L^{F}_{i_k}}{2} \|\Delta x_{k+1}\|^2.
        \end{split}
    \end{equation}
\end{lemma}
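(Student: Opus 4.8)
The plan is to follow the proof of \cref{bound_subgradient} essentially line for line, since the subproblem defining $s_{k+1}$ in Nonmonotone LiBCoD \ref{alg:nonmonotone-libcod} is identical in structure to the one in Monotone LiBCoD \ref{alg:libcod}; the only place where the monotone analysis used the decrease property was to bound the objective values $\varphi(x_k)$ by $\varphi(x_0)$, and that role is now played by the second assertion of \cref{sec:nonmon:artf_decreas}. First I would invoke the optimality condition for the strongly convex subproblem: since $s_{k+1}$ is its minimiser, $0$ lies in the subdifferential of the subproblem objective, which gives
\[
  -\nabla_{i_k} f(x_k) - \nabla_{i_k} F(x_k)^{T} \nabla h\bigl(l^{i_k}_F(s_{k+1};x_k)\bigr) - \beta_{k+1}\Delta x_{k+1} \in \partial g_{i_k}(s_{k+1}).
\]

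Next, writing $\varphi = f + h\circ F + g$ and using the exact sum rule for the limiting subdifferential along the $i_k$-th block together with $x^i_{k+1} = x^i_k$ for $i \neq i_k$, I would estimate $\dist(0, \partial_{i_k}\varphi(x_{k+1}))$ by the norm of the difference-of-gradients expression displayed just above \eqref{eq:lem2-bndsub}, and then apply the triangle inequality, the block-coordinate Lipschitz continuity of $\nabla f$ and $\nabla F$ and the $L_h$-Lipschitz continuity of $\nabla h$ from \cref{ass:1}, the elementary estimate $\|F(x_{k+1}) - l^{i_k}_F(s_{k+1};x_k)\| \le \tfrac{L^F_{i_k}}{2}\|\Delta x_{k+1}\|^2$, and the uniform bound $\|\nabla_{i_k}F(x_k)\| \le M^F_{i_k}$ — which is available because, by the second statement of \cref{sec:nonmon:artf_decreas}, all iterates lie in the compact level set $\mathcal{L}_\varphi(x_0)$ — to arrive, exactly as in the monotone case, at
\[
  \dist(0, \partial_{i_k}\varphi(x_{k+1})) \le \bigl(L^f_{i_k} + \|\nabla h(F(x_{k+1}))\|\, L^F_{i_k} + \beta_{k+1}\bigr)\|\Delta x_{k+1}\| + \tfrac{L_h M^F_{i_k} L^F_{i_k}}{2}\|\Delta x_{k+1}\|^2.
\]

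Finally I would bound $\|\nabla h(F(x_{k+1}))\|$: applying the descent lemma for $h$ at $u = F(x_{k+1})$ with $v = u - \tfrac{1}{L_h}\nabla h(u)$ and using $h(v) \ge h^*$ gives $\|\nabla h(F(x_{k+1}))\| \le \sqrt{2L_h\,(h(F(x_{k+1})) - h^*)}$; then $h(F(x_{k+1})) - h^* \le \varphi(x_{k+1}) - \varphi^*$ by the lower bounds of \cref{ass:1}, and $\varphi(x_{k+1}) \le \varphi(x_0)$ by the second statement of \cref{sec:nonmon:artf_decreas}. Substituting yields \eqref{bounde_gradient2}. I expect no genuine obstacle beyond the bookkeeping already carried out in \cref{bound_subgradient}: the single substantive modification is that the uniform bound $\varphi(x_k) \le \varphi(x_0)$, which in the monotone case followed for free from the decrease \eqref{decrease}, is now supplied by the Lyapunov property of $(\mathcal{R}_k)_{k\ge0}$ established in \cref{sec:nonmon:artf_decreas}, which is also what licenses the existence of the constants $M^F_{i_k}$.
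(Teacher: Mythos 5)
Your proposal is correct and is essentially the paper's intended argument: the paper gives no separate proof of this lemma, stating only that it is analogous to \cref{bound_subgradient}, and your adaptation supplies exactly the needed modification — replacing the monotone decrease by the level-set containment $x_k \in \mathcal{L}_\varphi(x_0)$ from \cref{sec:nonmon:artf_decreas}, which both gives $\varphi(x_{k+1}) \le \varphi(x_0)$ and, via compactness of $\mathcal{L}_\varphi(x_0)$, the bounds $\|\nabla_{i_k}F(x_k)\| \le M^F_{i_k}$. Your bounding of $\|\nabla h(F(x_{k+1}))\|$ directly at $x_{k+1}$ is in fact slightly cleaner than the corresponding step in \eqref{eq:lem2-bndsub} and changes nothing in the final constants.
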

Before deriving the iteration complexity, we first establish the asymptotic convergence in expectation of Nonmonotone LiBCoD \ref{alg:nonmonotone-libcod}. The proof proceeds along similar lines as that of \cref{propos:asymtotic:monotone}.
\begin{proposition}[Asymptotic convergence]
    \label{thm:asymtotic:N_monotone}
    Let $(x_k)_{k\geq 0}$ be generated by Nonmonotone LiBCoD \ref{alg:nonmonotone-libcod}. If \cref{ass:1} holds on $\mathcal{L}_{\varphi}(x_0)$, and $g$ is bounded from below by an affine function, then we have 
    \[
        \lim_{k\to \infty} \E\left[\dist(0, \partial\varphi(x_{k})) \right] = 0.
    \]
\end{proposition}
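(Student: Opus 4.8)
The plan is to mirror the proof of \cref{propos:asymtotic:monotone}, replacing the role of $\varphi$ by the Lyapunov function $\mathcal{R}_k$. First I would invoke \cref{sec:nonmon:artf_decreas}(1), which gives the sufficient decrease
\[
    \mathcal{R}_{k+1} \leq \mathcal{R}_{k} - \frac{u_{k+1}\beta_{k+1}}{2}\|x_{k+1} - x_k\|^2
    \leq \mathcal{R}_{k} - \frac{u_{\min}\beta_{\min}}{2}\|x_{k+1} - x_k\|^2,
\]
using $u_{k+1} > u_{\min}$ and $\beta_{k+1} \geq \beta_{\min}$. Taking expectations, telescoping from $k=0$ to $K$, and using $\mathcal{R}_{K+1} \geq \varphi(x_{K+1}) \geq \varphi^*$ (the lower bound coming from \cref{ass:1}(4), or alternatively from the affine lower bound on $g$ together with boundedness of the iterates established in \cref{sec:nonmon:artf_decreas}(2) and continuity of $f, h\circ F$ on the compact level set), one obtains
\[
    \frac{u_{\min}\beta_{\min}}{2}\sum_{k=0}^{\infty}\E\left[\|x_{k+1} - x_k\|^2\right] \leq \mathcal{R}_0 - \varphi^* = \varphi(x_0) - \varphi^* < \infty,
\]
whence $\lim_{k\to\infty}\E[\|x_{k+1}-x_k\|^2] = 0$ and, by Jensen, $\lim_{k\to\infty}\E[\|x_{k+1}-x_k\|] = 0$.

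Next I would bound the full subdifferential by the block subdifferentials exactly as in \eqref{subgrad_bound_asymp}: writing $\E_k[\cdot] = \E[\cdot \mid x_k]$ and using $p_i \geq p_{\min}$,
\[
    \dist(0,\partial\varphi(x_{k+1})) \leq \sum_{i=1}^{n_{\text{block}}}\dist(0,\partial_i\varphi(x_{k+1}))
    \leq \frac{1}{p_{\min}}\E_k\left[\dist(0,\partial_{i_k}\varphi(x_{k+1}))\right].
\]
Then I would apply \cref{lem:4} (the nonmonotone analogue of \cref{bound_subgradient}, valid since the iterates lie in the compact set $\mathcal{L}_{\varphi}(x_0)$ by \cref{sec:nonmon:artf_decreas}(2)) and the uniform upper bound $\beta_{k+1} \leq \bar\beta$ — here one must be mild about what $\bar\beta$ is in the nonmonotone setting; since $\mathcal{R}_k \leq \varphi(x_0)$ and the inner loop doubling stops once $\beta_{k+1} \geq L^f_{i_k} + L^F_{i_k}\sqrt{2L_h}\sqrt{\mathcal{R}_k - \varphi^*} \leq L^f_{i_k}+L^F_{i_k}\sqrt{2L_h}\sqrt{\varphi(x_0)-\varphi^*}$, the same bound \eqref{bar_gamma} carries over. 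This yields
\[
    \E[\dist(0,\partial\varphi(x_{k+1}))] \leq \frac{L^f + L^F\sqrt{2L_h}\sqrt{\varphi(x_0)-\varphi^*} + \bar\beta}{p_{\min}}\E[\|\Delta x_{k+1}\|]
    + \frac{L_h M^F L^F}{2p_{\min}}\E[\|\Delta x_{k+1}\|^2].
\]
Passing to the limit and using that both $\E[\|\Delta x_{k+1}\|] \to 0$ and $\E[\|\Delta x_{k+1}\|^2]\to 0$ gives $\lim_{k\to\infty}\E[\dist(0,\partial\varphi(x_k))] = 0$, which is the claim.

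The only genuinely delicate point — the rest being a faithful transcription of \cref{propos:asymtotic:monotone} — is justifying that the $\beta_{k+1}$ produced by the inner loop remain uniformly bounded and that the subproblem lower bounds used in \cref{lem:4} still go through. In the monotone case this rests on the monotone decrease $\varphi(x_k) \leq \varphi(x_0)$; in the nonmonotone case it rests on \cref{sec:nonmon:artf_decreas}(2), namely $\varphi(x_k) \leq \mathcal{R}_k \leq \varphi(x_0)$, so in fact the same inequalities hold verbatim with $\varphi(x_k)$ replaced by $\mathcal{R}_k$ wherever the inner-loop termination is analyzed, and by $\varphi(x_0)$ in the final uniform estimates. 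I would therefore state \cref{lem:4} and the bound $\bar\beta$ as already established (the excerpt does) and present this proof as a short remark that the argument of \cref{propos:asymtotic:monotone} applies mutatis mutandis with $\mathcal{R}_k$ in place of $\varphi(x_k)$ in the telescoping step.
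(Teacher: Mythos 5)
Your proposal is correct and follows exactly the route the paper intends: the paper omits a detailed proof, stating only that it proceeds along the lines of \cref{propos:asymtotic:monotone}, and your adaptation (telescoping the $\mathcal{R}_k$-decrease from \cref{sec:nonmon:artf_decreas}, using $\varphi(x_k)\leq\mathcal{R}_k\leq\varphi(x_0)$ to keep $\bar\beta$ and \cref{lem:4} valid, then passing to the limit) is precisely that argument. Your attention to why the uniform bound on $\beta_{k+1}$ carries over in the nonmonotone setting is a point the paper glosses over, and it is handled correctly.
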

Next, we derive the following convergence rate. Next, we derive the following convergence rate. The proof follows a similar reasoning as that of \cref{thm:convergence:monotone}.
\begin{theorem}[Iteration complexity]
    \label{thm:convergence:nonmonotone}
    Let $(x_k)_{k\geq 0}$ be generated by Nonmonotone LiBCoD \ref{alg:nonmonotone-libcod}. If \cref{ass:1} holds on $\mathcal{L}_{\varphi}(x_0)$, and $g$ is bounded from below by an affine function, then, for any $\epsilon>0$,  Nonmonotone LiBCoD \ref{alg:nonmonotone-libcod} yields, in expectation, an $\epsilon$ first-order solution of \eqref{eq:problem}, i.e.,
    \[
     \E\left[\dist(0, \partial\varphi(x_{k+1})) \right] \leq \epsilon,
    \]
    within  
    \begin{gather*}
            \left(\frac{4\left(L^f+ L^F \sqrt{2L_h} \sqrt{\varphi(x_0)- \varphi^*} + \bar{\beta}\right)^2}{p_{\min} u_{\min} \beta_{\min} \epsilon^2} + \frac{2 L_h M^F L^F}{p_{\min} u_{\min}\beta_{\min} \epsilon} \right)\left(\varphi(x_0) - \varphi^*\right)
     \end{gather*}
    iterations.  
\end{theorem}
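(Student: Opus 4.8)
The plan is to mirror the proof of \cref{thm:convergence:monotone}, but with the Lyapunov sequence $\mathcal{R}_k$ playing the role that $\varphi(x_k)$ played in the monotone case. First, by \cref{thm:asymtotic:N_monotone} we have $\E\left[\dist(0,\partial\varphi(x_k))\right] \to 0$, so we may let $N \ge 0$ be the first index with $\E\left[\dist(0,\partial\varphi(x_{N+1}))\right] \le \epsilon$; it then suffices to bound $N$. For each $k \in [0:N-1]$ we invoke the subgradient estimate \eqref{bounde_gradient2} of \cref{lem:4} and split into the same two cases as in the monotone proof: \textit{Case 1}, in which the linear term $\bigl(L^f + L^F\sqrt{2L_h}\sqrt{\varphi(x_0)-\varphi^*} + \bar{\beta}\bigr)\|\Delta x_{k+1}\|$ dominates the quadratic term $\tfrac{L_h M^F L^F}{2}\|\Delta x_{k+1}\|^2$, and \textit{Case 2}, the complement. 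Writing $\mathcal{I}_1,\mathcal{I}_2$ for the corresponding subsets of $[0:N-1]$, we have $N = |\mathcal{I}_1| + |\mathcal{I}_2| + 1$.

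The only genuinely new ingredient is to replace the monotone decrease $\varphi(x_{k+1}) \le \varphi(x_k) - \tfrac{\beta_{k+1}}{2}\|\Delta x_{k+1}\|^2$ by the descent inequality of \cref{sec:nonmon:artf_decreas}, namely $\mathcal{R}_{k+1} \le \mathcal{R}_k - \tfrac{u_{k+1}\beta_{k+1}}{2}\|\Delta x_{k+1}\|^2$, which holds along every realization. Since $u_{k+1} \ge u_{\min}$ and $\beta_{k+1} \ge \beta_{\min}$ whenever it is used (it is doubled at least once inside the repeat--until loop, starting from $\beta_1 \ge \beta_{\min}/2$), this gives the pointwise bound $\|\Delta x_{k+1}\|^2 \le \tfrac{2}{u_{\min}\beta_{\min}}(\mathcal{R}_k - \mathcal{R}_{k+1})$. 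In \textit{Case 1}, squaring \eqref{bounde_gradient2} yields $\dist^2\!\left(0,\partial_{i_k}\varphi(x_{k+1})\right) \le 2\bigl(L^f + L^F\sqrt{2L_h}\sqrt{\varphi(x_0)-\varphi^*} + \bar{\beta}\bigr)^2\|\Delta x_{k+1}\|^2 \le \tfrac{4\bigl(L^f + L^F\sqrt{2L_h}\sqrt{\varphi(x_0)-\varphi^*} + \bar{\beta}\bigr)^2}{u_{\min}\beta_{\min}}(\mathcal{R}_k - \mathcal{R}_{k+1})$, and in \textit{Case 2}, $\dist\!\left(0,\partial_{i_k}\varphi(x_{k+1})\right) \le L_h M^F L^F\|\Delta x_{k+1}\|^2 \le \tfrac{2 L_h M^F L^F}{u_{\min}\beta_{\min}}(\mathcal{R}_k - \mathcal{R}_{k+1})$.

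Next, exactly as in the monotone argument, pass from the block subdifferential to the full one: using $p_i \ge p_{\min}$ and block-separability, $\dist\!\left(0,\partial\varphi(x_{k+1})\right) \le \sum_i \dist\!\left(0,\partial_i\varphi(x_{k+1})\right) \le p_{\min}^{-1}\E_k\!\left[\dist\!\left(0,\partial_{i_k}\varphi(x_{k+1})\right)\right]$, and likewise $\dist^2\!\left(0,\partial\varphi(x_{k+1})\right) = \sum_i \dist^2\!\left(0,\partial_i\varphi(x_{k+1})\right) \le p_{\min}^{-1}\E_k\!\left[\dist^2\!\left(0,\partial_{i_k}\varphi(x_{k+1})\right)\right]$, where $\E_k[\cdot] = \E[\cdot\mid x_k]$ and we use that $\mathcal{R}_k$ is $x_k$-measurable while $i_k,\beta_{k+1},u_{k+1},\mathcal{R}_{k+1}$ are not. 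Taking total expectations and summing over $\mathcal{I}_1 \subseteq [0:N-1]$, the telescoping works because $(\mathcal{R}_k)$ is monotonically decreasing by \cref{sec:nonmon:artf_decreas}: $\sum_{k\in\mathcal{I}_1}\E\!\left[\mathcal{R}_k - \mathcal{R}_{k+1}\right] \le \sum_{k=0}^{N-1}\E\!\left[\mathcal{R}_k - \mathcal{R}_{k+1}\right] = \E\!\left[\mathcal{R}_0 - \mathcal{R}_N\right] \le \varphi(x_0) - \varphi^*$, since $\mathcal{R}_0 = \varphi(x_0)$ and $\mathcal{R}_N \ge \varphi(x_N) \ge \varphi^*$ by \cref{th:kkt}. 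This gives $|\mathcal{I}_1|\epsilon^2 < \tfrac{4\bigl(L^f + L^F\sqrt{2L_h}\sqrt{\varphi(x_0)-\varphi^*} + \bar{\beta}\bigr)^2}{p_{\min}u_{\min}\beta_{\min}}(\varphi(x_0)-\varphi^*)$ and, analogously, $|\mathcal{I}_2|\epsilon < \tfrac{2 L_h M^F L^F}{p_{\min}u_{\min}\beta_{\min}}(\varphi(x_0)-\varphi^*)$; adding these and the trailing $+1$ (absorbed into the $\epsilon^{-2}$ term) yields the stated bound on $N$. The main obstacle is purely bookkeeping: correctly threading $\mathcal{R}_k$ rather than $\varphi(x_k)$ through the telescoping, checking that $\mathcal{R}_0 - \mathcal{R}_N$ is still controlled by $\varphi(x_0) - \varphi^*$, and keeping the per-realization descent and the conditional-expectation step in the right order.
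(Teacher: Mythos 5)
Your proposal is correct and is exactly the adaptation the paper intends: the paper omits the proof of \cref{thm:convergence:nonmonotone}, stating only that it follows the reasoning of \cref{thm:convergence:monotone}, and your argument carries that reasoning over faithfully by replacing the monotone decrease of $\varphi(x_k)$ with the per-iteration descent of the Lyapunov sequence $\mathcal{R}_k$ from \cref{sec:nonmon:artf_decreas}, using \cref{lem:4} for the subgradient bound, and telescoping $\mathcal{R}_k$ with $\mathcal{R}_0=\varphi(x_0)$ and $\mathcal{R}_N\ge\varphi(x_N)\ge\varphi^*$ via \cref{th:kkt}. This yields precisely the stated bound with the extra factor $1/u_{\min}$, so no gaps beyond those already present at the paper's own level of rigor.
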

The complexity of Nonmonotone LiBCoD \ref{alg:nonmonotone-libcod} to achieve, in expectation, an $\epsilon$-stationary point of \eqref{eq:problem}, i.e.,  
\[
    \E\left[\dist(0, \partial\varphi(x_{k+1})) \right] \leq \epsilon,
\]  
is of the same order as that of Monotone LiBCoD \ref{alg:libcod}, with the only difference being an additional constant factor of \(\frac{1}{u_{\min}}>1\) in the nonmonotone case.
\section{Optimization Problems with Equality Constraints}
\label{sec5}
In this section, we tackle the following nonconvex, nonsmooth optimization problem with nonlinear equality constraints using a quadratic penalty method:
\begin{align}  \label{eq:constr_prob}
    &\min_{x} f(x) + \sum_{i=1}^{n_{\text{block}}} g_{i}(x^i) \nonumber \\
    &\text{s.t.} \quad F(x) = 0.
\end{align}
Problems of the form \eqref{eq:constr_prob} appear in several applications, namely machine learning \cite{tepper2018clustering,sahin2019inexact} image processing \cite{fessler2020optimization}, and many other fields. We are interested in (approximate) first-order  solutions (KKT) of \eqref{eq:constr_prob}. Hence, let us introduce the following definitions:

\begin{definition}[First-order solution and $\epsilon$ first-order solution of \eqref{eq:constr_prob}]
    \label{firstorder}
    The vector $x^*$ is a first-order solution (KKT point  ) of \eqref{eq:constr_prob} if $\exists \lambda^* \in \mathbb{R}^m$ such that
    \begin{equation*}
    \begin{split}
            -\nabla f(x^*) - \nabla F(x^*)^T \lambda^* \in \partial g(x^*)\quad \text{and} \quad F(x^*) = 0.
    \end{split}
    \end{equation*}
    Moreover, $x^*$ is an $\epsilon$-first-order solution ($\epsilon$- KKT point) of \eqref{eq:constr_prob} if $\exists \lambda^* \in \mathbb{R}^m$ such that:
    \begin{equation*}
       \begin{split}
            \dist\left(-\nabla f(x^*) + \nabla F(x^*)^T \lambda^*, \partial g(x^*)\right) \leq  \epsilon\quad
            \text{and} \quad \|F(x^*)\| \leq  \epsilon,
        \end{split}
    \end{equation*}
    where $\partial g(x^*) = \prod_{i=1}^{n_{\text{block}}} \partial g_i(x^*)$.
\end{definition}
In this section, in addition to \cref{ass:1}, we also assume the following
\begin{assumption}\label{ass:const}
    Given a compact set $\mathcal{C}$, there exists $\sigma > 0$ such that for all $x\in \mathcal{C}$
    \begin{equation} \label{contsr_qualif}
        \sigma \|F(x)\| \leq  \dist \left( - \nabla F(x)^T F(x), \prod_{i=1}^{n_{\text{block}}} \partial^{\infty} g_i(x^i) \right),
    \end{equation}
    where $\partial^{\infty} g_i$ denotes the horizon subdifferential of $g_i$ for all $i \in \{1, \ldots, n_{\text{block}}\}$.
\end{assumption}
Note that this assumption generalizes the Linear Independent Constraint Qualification (LICQ), commonly assumed in the literature, to problems with non-Lipschitz objective functions.  Such an assumption is also considered for one-block problems in \cite{Bodard2024inexact,nabou2025regularized,ElBourkhissi2025}.
The quadratic penalty function associated with  \eqref{eq:constr_prob} is
\begin{equation} \label{penalty_function}
    \varphi_{\rho}(x) := f(x) + \frac{\rho}{2}\|F(x)\|^2 + \sum_{i=1}^{n_{\text{block}}} g_{i}(x^i).
\end{equation}
Note that minimizing $\varphi_{\rho}(\cdot)$ is a special case of problem \eqref{eq:problem} with $h(\cdot) = \frac{\rho}{2}\|\cdot\|^2$, where $L_h = \rho$. Therefore, one can use Monotone LiBCoD \ref{alg:libcod} and/or Nonmonotone LiBCoD \ref{alg:nonmonotone-libcod} to minimize $\varphi_{\rho}(x)$. In the next theorem, we provide the iteration complexity of Monotone LiBCoD \ref{alg:libcod} and/or Nonmonotone LiBCoD \ref{alg:nonmonotone-libcod} to yield an $\epsilon$-KKT point of problem \eqref{eq:constr_prob}.
\begin{theorem}[Iteration Complexity]
    \label{thm:convergence:penalty}
    Let $(x_k)_{k \geq 0}$ be generated by Monotone LiBCoD \ref{alg:libcod} or Nonmonotone LiBCoD \ref{alg:nonmonotone-libcod} while minimizing $\varphi_{\rho}(x)$, as defined in \eqref{penalty_function}, and let $\beta_{\min}$ be chose such that $\beta_{\min} = \mathcal{O}\left(\sqrt{\rho}\right)$. Assume the following
    \begin{itemize}
        \item For Monotone LiBCoD \ref{alg:libcod}, \cref{ass:1} holds on a compact set to which the iterates belong.
        \item For Nonmonotone LiBCoD \ref{alg:nonmonotone-libcod}, \cref{ass:1} holds on the level set $\mathcal{L}_{\varphi_{\rho}}(x_0)$, and $g$ is bounded below by an affine function.
    \end{itemize}
    Then, for any $\epsilon > 0$, both Monotone LiBCoD \ref{alg:libcod} and Nonmonotone LiBCoD \ref{alg:nonmonotone-libcod} takes at most 
    \[
        \bar{K} = \mathcal{O}\left(\frac{\rho}{\beta_{\min} \epsilon^2}\right)
    \]  
    iterations to yield, in expectation, an $\epsilon$-stationary point, $\bar{x}_{k^*}$, of the minimization problem for $\varphi_{\rho}(\cdot)$, i.e., there exists $k^* \leq \bar{K}$ such that 
    \begin{align*}
        &\E\left[\dist \left(-\nabla f(\bar{x}_{k^*}) - \rho \nabla F(\bar{x}_{k^*})^T F(\bar{x}_{k^*}),  \partial g(\bar{x}_{k^*})\right)\right] 
        = \mathbb{E}\left[\dist(0, \partial \varphi_{\rho}(\bar{x}_{k^*}))\right] \leq \epsilon.
    \end{align*}
    Furthermore, if $\rho$ is chosen such that $\rho \geq \frac{M_f + 2}{\sigma \epsilon}$, then $\bar{x}_{k^*}$ is, in expectation, an $\epsilon$-KKT point of problem \eqref{eq:constr_prob}. In this case, the iteration complexity of both Monotone LiBCoD \ref{alg:libcod} and Nonmonotone LiBCoD \ref{alg:nonmonotone-libcod} becomes 
    \[
    \bar{K} = \mathcal{O}\left(\frac{\sqrt{\rho}}{\epsilon^2}\right) = \mathcal{O}\left(\frac{1}{\epsilon^{2.5}}\right).
    \]
\end{theorem}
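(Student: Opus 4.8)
The plan is to apply the iteration-complexity results already established---\cref{thm:convergence:monotone} for Monotone LiBCoD and \cref{thm:convergence:nonmonotone} for Nonmonotone LiBCoD---to the surrogate objective $\varphi_\rho$, which is an instance of \eqref{eq:problem} with $h(\cdot)=\tfrac{\rho}{2}\|\cdot\|^2$ and hence $L_h=\rho$, and then to translate the resulting $\epsilon$-stationarity statement for $\varphi_\rho$ into an $\epsilon$-KKT statement for \eqref{eq:constr_prob} by an appropriate choice of $\rho$. First I would invoke \cref{thm:convergence:monotone}/\cref{thm:convergence:nonmonotone} with $L_h=\rho$: the bound on $N$ has leading term of order $\frac{(L^f+L^F\sqrt{2\rho}\sqrt{\varphi_\rho(x_0)-\varphi_\rho^*}+\bar\beta)^2}{p_{\min}\beta_{\min}\epsilon^2}(\varphi_\rho(x_0)-\varphi_\rho^*)$ plus a term of order $\frac{\rho M^F L^F}{\beta_{\min}\epsilon}(\varphi_\rho(x_0)-\varphi_\rho^*)$; since $\bar\beta=\mathcal{O}(\sqrt{\rho})$ by \eqref{bar_gamma} and $\beta_{\min}=\mathcal{O}(\sqrt{\rho})$ by hypothesis, one checks that both terms are $\mathcal{O}(\rho/(\beta_{\min}\epsilon^2))$ (for the second term, $\rho/\beta_{\min}=\mathcal{O}(\sqrt{\rho})$ so it is even lower order in $\rho$ but we absorb it), giving $\bar K=\mathcal{O}(\rho/(\beta_{\min}\epsilon^2))$; with $\beta_{\min}=\mathcal{O}(\sqrt\rho)$ this is $\mathcal{O}(\sqrt\rho/\epsilon^2)$. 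This yields an index $k^*\le\bar K$ with $\E[\dist(0,\partial\varphi_\rho(\bar x_{k^*}))]\le\epsilon$; since $\partial\varphi_\rho(x)=\nabla f(x)+\rho\nabla F(x)^\top F(x)+\partial g(x)$ by the exact sum rule (the first two pieces are smooth, $g$ is separable convex), this is exactly the first displayed conclusion.

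Next I would pass from $\epsilon$-stationarity of $\varphi_\rho$ to $\epsilon$-KKT of \eqref{eq:constr_prob}. Set $\lambda^* := \rho F(\bar x_{k^*})$. The stationarity residual $\dist(-\nabla f(\bar x_{k^*})-\nabla F(\bar x_{k^*})^\top\lambda^*,\partial g(\bar x_{k^*}))=\dist(0,\partial\varphi_\rho(\bar x_{k^*}))\le\epsilon$ in expectation is immediate. It remains to bound $\|F(\bar x_{k^*})\|$. Here I would use \cref{ass:const}: from $\dist(0,\partial\varphi_\rho(\bar x_{k^*}))\le\epsilon$ there is $\xi\in\partial g(\bar x_{k^*})$ with $\|\nabla f(\bar x_{k^*})+\rho\nabla F(\bar x_{k^*})^\top F(\bar x_{k^*})+\xi\|\le\epsilon$, so $\rho\,\dist(-\nabla F(\bar x_{k^*})^\top F(\bar x_{k^*}),\,\tfrac1\rho(\nabla f(\bar x_{k^*})+\xi)+\text{(recession directions)})$... more precisely, rescaling, $\dist(-\nabla F(\bar x_{k^*})^\top F(\bar x_{k^*}),\prod_i\partial^\infty g_i(\bar x_{k^*}^i))\le\tfrac1\rho(\|\nabla f(\bar x_{k^*})\|+\epsilon)$ using that $\partial g(\bar x_{k^*})+\partial^\infty g(\bar x_{k^*})\subseteq\partial g(\bar x_{k^*})$ and that $\tfrac1\rho\partial g$ directions shrink into the horizon subdifferential as $\rho\to\infty$; combined with the constraint qualification \eqref{contsr_qualif}, $\sigma\|F(\bar x_{k^*})\|\le\tfrac1\rho(M_f+\epsilon)$ where $M_f:=\sup_{\mathcal C}\|\nabla f\|$. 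Hence $\|F(\bar x_{k^*})\|\le\frac{M_f+\epsilon}{\sigma\rho}$, and the choice $\rho\ge\frac{M_f+2}{\sigma\epsilon}$ (using $\epsilon\le 2$ WLOG, or simply $\epsilon<2$) forces $\|F(\bar x_{k^*})\|\le\epsilon$. Taking expectations throughout and noting $M_f$, $\sigma$ are deterministic constants on the compact set gives $\E[\|F(\bar x_{k^*})\|]\le\epsilon$, completing the $\epsilon$-KKT claim; substituting $\rho=\Theta(1/\epsilon)$ into $\bar K=\mathcal{O}(\sqrt\rho/\epsilon^2)$ yields $\mathcal{O}(\epsilon^{-2.5})$.

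The main obstacle I anticipate is the rigorous handling of the horizon/recession term in the passage from $\partial\varphi_\rho$ to the constraint qualification: one must argue carefully that dividing an element of $\partial g$ by $\rho$ and letting $\rho$ grow places the limiting directions inside $\partial^\infty g$, so that \cref{ass:const} can legitimately be invoked with the $\tfrac1\rho\nabla f$ error term peeled off. For convex $g_i$ this is standard---$\partial^\infty g_i(x) = N_{\mathrm{dom}\,g_i}(x)$ and $\tfrac1\rho\partial g_i(x)$ is contained in a bounded perturbation of it when $g_i$ is finite near $x$---but the non-Lipschitz case requires invoking properties of the limiting/horizon subdifferential from \cite{rockafellar2009variational}. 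A secondary, more bookkeeping-level obstacle is verifying that the two terms in the complexity bound collapse to $\mathcal{O}(\rho/(\beta_{\min}\epsilon^2))$ uniformly in $\rho$, i.e. that $\varphi_\rho(x_0)-\varphi_\rho^* = f(x_0)+\tfrac\rho2\|F(x_0)\|^2+g(x_0)-(f^*+g^*)$ grows at most linearly in $\rho$ (which it does, with slope $\tfrac12\|F(x_0)\|^2$), so the product $(\sqrt\rho)^2\cdot\rho/\rho = \mathcal O(\rho)$ in the numerator of the first term and $\rho\cdot\rho/\sqrt\rho$ in the second both land at the claimed order after dividing by $\beta_{\min}=\Theta(\sqrt\rho)$.
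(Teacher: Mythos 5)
Your proposal follows essentially the same route as the paper's proof: invoke \cref{thm:convergence:monotone}/\cref{thm:convergence:nonmonotone} with $L_h=\rho$ and $\beta_{\min}=\Theta(\sqrt{\rho})$, identify $\partial\varphi_\rho$ by the sum rule, set $\bar\lambda_{k^*}=\rho F(\bar x_{k^*})$, and convert $\epsilon$-stationarity of $\varphi_\rho$ into feasibility through \cref{ass:const} with the choice $\rho\geq\frac{M_f+2}{\sigma\epsilon}$. The two obstacles you flag are precisely the terse points of the paper's own argument: it treats the horizon term via positive homogeneity of the distance to the cone $\prod_i\partial^{\infty}g_i$ together with the crude bound $\rho\,\dist\bigl(-\nabla F(\bar x_{k^*})^T F(\bar x_{k^*}),\prod_i\partial^{\infty}g_i(\bar x^i_{k^*})\bigr)\leq\dist\bigl(-\nabla f(\bar x_{k^*})-\rho\nabla F(\bar x_{k^*})^T F(\bar x_{k^*}),\prod_i\partial g_i(\bar x^i_{k^*})\bigr)+\|\nabla f(\bar x_{k^*})\|+1\leq\epsilon+M^f+1$ (so the slack ``$+1$'' plays the role of your recession-direction argument), and it simply absorbs $\varphi_\rho(x_0)-\varphi_\rho^*$ into the $\mathcal{O}(\cdot)$ constants rather than tracking its $\rho$-dependence as you attempt in your final paragraph.
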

\begin{proof}
    By using a monotone/nonmonotone LiBCoD algorithm to minimize $\varphi_{\rho}(\cdot)$, it follows from \cref{thm:convergence:monotone}/\cref{thm:convergence:nonmonotone} that Monotone LiBCoD \ref{alg:libcod}/Nonmonotone LiBCoD \ref{alg:nonmonotone-libcod} takes at most $\bar{K}= \mathcal{O}\left(\frac{L_h}{\beta_{\min}\epsilon^2}\right) = \mathcal{O}\left(\frac{\sqrt{\rho}}{\epsilon^2}\right)$ (as $L_h=\rho$ and $\beta_{\min} = \mathcal{O}(\sqrt{L_h})$) iterations to yield, in expectation, an $\epsilon$-stationary point, $\bar{x}_{k^*}$, of the minimization problem for $\varphi_{\rho}(\cdot)$, i.e., there exists $k^* \leq \bar{K}$ such that
    \begin{align} \label{stat_penal}
        &\E\left[\dist \left(-\nabla f(\bar{x}_{k^*}) - \nabla F(\bar{x}_{k^*})^T \nabla h(F(\bar{x}_{k^*})), \partial g(\bar{x}_{k^*})\right)\right] \nonumber \\
        &= \E\left[\dist \left(-\nabla f(\bar{x}_{k^*}) - \rho \nabla F(\bar{x}_{k^*})^T F(\bar{x}_{k^*}), \prod_{i=1}^{n_{\text{block}}} \partial g_i(\bar{x}^i_{k^*})\right)\right] \leq \epsilon.
    \end{align}
    On the other hand, we have
    \begin{align*}
        &\rho\, \E\left[\dist \left( - \nabla F(\bar{x}_{k^*})^T F(\bar{x}_{k^*}), \prod_{i=1}^{n_{\text{block}}} \partial^{\infty} g_i(\bar{x}^i_{k^*}) \right)\right] \\
        &= \E\left[\dist \left( - \rho \nabla F(\bar{x}_{k^*})^T F(\bar{x}_{k^*}), \prod_{i=1}^{n_{\text{block}}} \partial^{\infty} g_i(\bar{x}^i_{k^*}) \right) \right]\\
        & \leq \E\left[ \dist \left(-\nabla f(\bar{x}_{k^*}) - \rho \nabla F(\bar{x}_{k^*})^T F(\bar{x}_{k^*}), \prod_{i=1}^{n_{\text{block}}} \partial g_i(\bar{x}^i_{k^*})\right)\right]\\
        \MoveEqLeft[-1]+ \E\left[\|\nabla f(\bar{x}_{k^*})\|\right] + 1 \\
        & \overset{\eqref{stat_penal}}{\leq} \epsilon + M^f + 1,
    \end{align*}
    where $M^f > 0$ is such that $\E\left[\|\nabla f(x)\|\right]\leq M^f$ for any $x$ in some compact set. Furthermore, using \eqref{contsr_qualif} and choosing $\rho \geq \frac{M_f + 2}{\sigma\epsilon}$, it follows 
    \begin{equation}\label{feasib}
      \E\left[\|F(\bar{x}_{k^*})\|\right] \leq \frac{\epsilon + M_f + 1}{\rho\sigma} \leq \epsilon.
    \end{equation}
    Hence, from \eqref{stat_penal} and \eqref{feasib}, it follows that there exists $\bar{\lambda}_{k^*} = \rho F(\bar{x}_{k^*})$ such that when $\rho \geq \frac{M_f + 2}{\sigma\epsilon}$, $\bar{x}_{k^*}$ is, in expectation, an $\epsilon$-KKT point of problem \eqref{eq:constr_prob}, obtained via the monotone/nonmonotone LiBCoD algorithm after $\mathcal{O}\left(\frac{\sqrt{\rho}}{\epsilon^2}\right) = \mathcal{O}\left(\frac{1}{\epsilon^{2.5}}\right)$ Jacobian evaluations.\hfill\qed
\end{proof}
 Note that this iteration complexity matches the optimal one derived for quadratic penalty methods in the full-block setting in \cite{Bourkhissi2024ComplexityOL}.
\section{Numerical simulations}\label{sec6}
In this section, we perform several numerical experiments to evaluate the performance of the proposed methods and compare them with existing approaches from the literature. The implementation was carried out on a MacBook M2 with 16GB of RAM, using the Julia programming language.

\subsection{Binary classification}
We focus on solving the following optimization problem  
\begin{equation}
\label{sec:sim:num_prob}
    \min_{x \in \mathbb{R}^n} \frac{1}{2} \|F(x)\|^2 + \lambda\|x\|_1,
\end{equation}
where \( F(x) := (F_1(x),\ldots,F_m(x)) \), and the \(\ell_1\)-norm regularization promotes sparsity in the solution. Clearly, problem  \eqref{sec:sim:num_prob} is of the form of \eqref{eq:problem}. 
To define \( F(x) \), we consider the following two possible nonconvex formulations \cite{zhao2010convex,tran2020stochastic}. 
The first is based on a nonconvex squared-logarithmic loss  
\begin{equation} \label{nonlinear1}
      \hspace{-0.2cm} F_i(x) = \log\left(1 + \big(y_i(a_i^T x + b_i) - 1\big)^2\right), \, i=1:m,
\end{equation}
while the second uses the standard logistic function   
\begin{equation} \label{nonlinear2}
    F_i(x) = 1 - \frac{1}{1 + \exp\big(-y_i(a_i^T x + b_i)\big)}, \,  i=1:m,
\end{equation}
where \( y_i \) represents the binary labels, \( a_i \) denotes the feature vectors, and \( b_i \) is an offset parameter.
\subsubsection{Monotone LiBCod Vs. ProxCD}
We evaluate the performance of Monotone LiBCoD \ref{alg:libcod} and compare it against the proximal block coordinate descent method (ProxCD) \cite{deng2020efficiency}, as well as the full Gauss-Newton method \cite{marumo2024accelerated}, which corresponds to using a single full block in our approach.
While ProxCD serves as a natural baseline, it does not explicitly exploit the problem’s structure—particularly the smooth coupling between blocks—which our method is specifically designed to leverage. This comparison highlights the advantage of incorporating structural information into the optimization process.
We first test our method using the Gisette dataset from the LIBSVM collection \cite{chang2011libsvm}, by solving problem  \eqref{sec:sim:num_prob} with the formulations of $F(x)$ given in \eqref{nonlinear1}, while varying the block sizes and the regularization parameter $\lambda$.
We then compare the results with those obtained using ProxCD. Both methods are terminated either after $60$ seconds or upon reaching a predefined accuracy level ($85\%$, $90\%$, or $95\%$). For each combination of the regularization parameter  \( \lambda \) and block size, we run both methods three times and report the mean and standard deviation of the CPU time and the number of epochs (expressed as a percentage of full gradient evaluations) required to achieve the target accuracy. The results are summarized in \cref{block-size-comparison,block-size-comparison_epoch}. If a method fails to reach the target accuracy within $60$ second, the corresponding entry is marked as "-". From \cref{block-size-comparison,block-size-comparison_epoch}, it can be seen that, in general, Monotone LiBCoD consistently outperforms ProxCD in terms of CPU time and epochs, demonstrating significantly faster convergence to reasonable accuracy levels. Moreover, our block-based approach, which, for instance, uses only $10\%$ of the blocks per iteration, also outperforms the full Gauss-Newton method.

In summary, our results align with theoretical expectations regarding the number of epochs: smaller block sizes generally require fewer full gradient evaluations in relative terms. However, when considering CPU time, the optimal block size is \(500\), representing \(10\%\) of the data. Nonetheless, the optimal choice may vary depending on the specific problem formulation and targeted accuracy, highlighting the impact of problem structure and computational overhead on performance.

Overall, LiBCoD demonstrates more robustness with respect to the regularization parameter \( \lambda \) compared to ProxCD. Its superior computational efficiency and stability establish LiBCoD as the preferred choice for scenarios requiring both high precision and computational efficiency. 

\begin{table}[t]
\caption{Comparison of LiBCoD and ProxCD, on formulation \eqref{nonlinear1}, for various block sizes and $\lambda$ values in CPU time (s) on the Gisette dataset.  Monotone LiBCoD consistently outperforms ProxCD in CPU time, achieving faster convergence to reasonable accuracy levels. Additionally, our block-based approach using as little as $10\%$ of the blocks per iteration outperforms the full Gauss-Newton method.}
\label{block-size-comparison}
\vskip 0.15in
\begin{center}
\begin{small}
\begin{sc}
\setlength{\tabcolsep}{5.8pt} 
\resizebox{\linewidth}{!}{
\begin{tabular}{lcccccccccccccccccccccccccc}
\toprule
$\lambda$ & Bl-size & \multicolumn{6}{c} {LiBCoD} & \multicolumn{6}{c}{ProxCD} \\
\cmidrule(lr){3-8} \cmidrule(lr){9-14}
 &  & \multicolumn{2}{c}{$85\%$} & \multicolumn{2}{c}{$90\%$} & \multicolumn{2}{c}{$95\%$} & \multicolumn{2}{c}{$85\%$} & \multicolumn{2}{c}{$90\%$} & \multicolumn{2}{c}{$95\%$} \\
 \cmidrule(lr){3-4} \cmidrule(lr){5-6}  \cmidrule(lr){7-8} \cmidrule(lr){9-10} \cmidrule(lr){11-12} \cmidrule(lr){13-14}  
 & & mean  & std & mean & std & mean  & std & mean & std &mean & std & mean & std\\
\midrule
\multirow{4}{*}{$10^{2}$}         &   5  & 13.55 &  1.26 & 53.0 & 7.5 & -  & - & - & - &  - & - &  - & - &\\
                                  &  50  & 2.60  & 1.14 & 8.50  & 3.32 & - & - & - & - & -  &  - & - & - &    \\
                                  & 500   &  \textbf{0.56} & 0.02 & \textbf{1.27} & 0.33 & -  & - & 4.68 & 0.25 &  - & - & - & - &     \\
                                  & 5000  &  4.79 & 0.51 & 4.79 & 0.51 & - & - & 9.38 & 0.86 & 13.59  &  1.24  &  - &  - &   \\
\midrule
\multirow{4}{*}{$5$}              &   5  & 13.49  & 5.38 &  31.43 & 6.34 &  - & - & 38.95 &  13.72 &  - & - & -  & - &      \\
                                  &  50  &  2.13 & 0.73 & 3.59 & 0.20 & 18.26  &  3.08 & 11.98 & 2.57 & 26.57  &  5.01 & - & - &    \\
                                  & 500   & \textbf{0.65}  & 0.07 & \textbf{0.65} & 0.07 &  \textbf{1.25} & 0.12 & 5.11 & 0.85 & 8.20  &  1.24 & 29.25 & 4.33&     \\
                                  & 5000  &  5.98 & 0.17 & 5.98 & 0.17 & 5.98  & 0.17  & 9.40 & 0.22 & 10.81  &  0.18  & 28.69 & 0.48 &   \\
\midrule
\multirow{4}{*}{$0.1$}            &   5  & 14.64  & 7.58  & 35.71 &  17.84 & -  &  - &  51.07 & 7.22 & -  & - & - & - &      \\
                                  &  50  & 1.45  & 0.49 & 3.31 & 0.92 &  16.33 & 6.38 & 9.66 & 2.38 & 25.07  &  10.42 & - & - &    \\
                                  & 500   &  \textbf{0.68} & 0.03 & \textbf{0.68} & 0.03 & \textbf{1.12}  & 0.38 & 5.10 & 0.58 &  8.53 &  0.32 & 30.18 & 6.69 &     \\
                                  & 5000  &  6.00 & 0.15 & 6.00 & 0.15 & 6.00  & 0.15 & 8.43 & 0.15 & 9.67  &   0.18 & 25.60 & 0.51 &   \\
\midrule
\multirow{4}{*}{$10^{-3}$}         &   5  & 10.72  & 3.50 & 24.58 & 3.68 &  - & - & 36.12 & 7.89 & -  & - & -  & - &      \\
                                  &  50  & 1.72  & 0.04 & 2.87 & 0.69 & 17.96  & 12.38 & 8.83 & 1.49 & 21.29  & 5.67  & - & - &    \\
                                  & 500   & \textbf{0.73}  & 0.10 &\textbf{0.73}  & 0.10 &  \textbf{1.95} & 0.45 & 4.90 & 0.39 & 9.10  & 0.57  & 31.64 & 8.85 &     \\
                                  & 5000  &  5.64 & 0.04 & 5.64 & 0.04 & 5.64  & 0.04 & 8.02 & 0.06 & 9.18  &  0.07  & 28.11 & 1.02 &   \\
\midrule
\multirow{4}{*}{$10^{-5}$}         &   5  &  9.94 & 5.61 & 23.01 & 8.05 & -  & - &  36.15 & 17.33 & -  & -  & - & - &    \\
                                  &  50  & 1.48  & 0.20 & 3.26 & 0.81 & 13.80  & 2.64  & 11.20 & 2.72  & 25.28  & 4.52 &  - & - &  \\ 
                                  & 500   &  \textbf{0.57} & 0.03 & \textbf{0.57} & 0.03 & \textbf{1.31}  & 0.86 & 4.66 & 0.48 & 8.18  & 1.70  & 31.59 & 0.81 &     \\
                                  & 5000  & 5.67  & 0.20 & 5.67 & 0.20  & 5.67  & 0.20  & 8.22 & 0.18  & 9.42  & 0.22  & 28.48 & 0.67 &   \\
\midrule                                  
\end{tabular}
}
\end{sc}
\end{small}
\end{center}
\vskip -0.1in
\end{table}
\begin{table*}[t]
\caption{Comparison of LiBCoD and ProxCD, on formulation \eqref{nonlinear1}, for various block sizes and $\lambda$ values in epoch (i.e., percentage of gradient evaluation) on the Gisette dataset. Monotone LiBCoD consistently outperforms ProxCD in epochs, achieving faster convergence to reasonable accuracy levels. Additionally, our block-based approach using as little as $10\%$ of the blocks per iteration outperforms the full Gauss-Newton method.}
\label{block-size-comparison_epoch}
\vskip 0.15in
\begin{center}
\begin{small}
\begin{sc}
\setlength{\tabcolsep}{1pt} 
\resizebox{\linewidth}{!}{
\begin{tabular}{lcccccccccccccccccccccccccc}
\toprule
$\lambda$ & Bl-size & \multicolumn{6}{c} {LiBCoD} & \multicolumn{6}{c}{ProxCD} \\
\cmidrule(lr){3-8} \cmidrule(lr){9-14}
 &  & \multicolumn{2}{c}{$85\%$} & \multicolumn{2}{c}{$90\%$} & \multicolumn{2}{c}{$95\%$} & \multicolumn{2}{c}{$85\%$} & \multicolumn{2}{c}{$90\%$} & \multicolumn{2}{c}{$95\%$} \\
 \cmidrule(lr){3-4} \cmidrule(lr){5-6}  \cmidrule(lr){7-8} \cmidrule(lr){9-10} \cmidrule(lr){11-12} \cmidrule(lr){13-14}  
 & & mean(\%)  & std(\%)  & mean(\%) & std(\%) & mean(\%)  & std(\%) & mean(\%) & std(\%) &mean(\%) & std(\%) & mean(\%) & std(\%)\\
\midrule
\multirow{4}{*}{$10^{2}$}         &   5  & \textbf{3.1} &  0.25 & \textbf{14.0} & 0.41 & -  & - & - & - &  - & - &  - & - &\\
                                  &  50  & 6.0  & 2.64 & 19.66  & 7.57 & - & - & - & - & -  &  - & - & - &    \\
                                  & 500   & 10.0  & 0.0 & 23.33 & 5.77 & -  & - & 33.33 & 5.77 &  - & - & - & - & \\
                                  & 5000  &  100.0 & 0.0 & 100.0 & 0.0 & - & - & 200.0 & 0.0 & 500.0  &  0.0  &  - &  - &   \\
\midrule
\multirow{4}{*}{$5$}              &   5  & \textbf{3.16}  & 1.25  & \textbf{7.43} & 1.55 & -  & - &  5.33 & 1.98 & -  & - & - & - &      \\
                                  &  50  & 5.0  & 1.73 & 8.33 & 0.57 &  42.66 & 7.63 & 13.66 & 3.05 & 32.0  &  6.92 & - & - &    \\
                                  & 500  & 10.0 & 0.0 & 10.0 & 0.0 & \textbf{20.0}  & 0.0 & 26.66 & 11.54 &  63.33 &  11.54 & 306.66 & 55.07 &     \\
                                  & 5000 & 100.0 & 0.0 & 100.0 & 0.0 & 100.0  & 0.0 & 200.0 & 0.0 & 300.0  &   0.0 & 1700.0 & 0.0 &   \\
\midrule
\multirow{4}{*}{$0.1$}            &   5  & 3.43  & 1.75  & 8.43 & 4.20 & -  & - &  6.86 & 0.92 & -  & - & - & - &      \\
                                  &  50  & \textbf{3.33}  & 1.15 & \textbf{7.66} & 2.08 &  37.66 & 14.18 & 10.33 & 2.51 & 29.66  &  12.66 & - & - &    \\
                                  & 500   &  10.0 & 0.0 & 10.0 & 0.0 & \textbf{16.66}  & 5.77 & 26.66 & 5.77 &  70.0 &  0.0 & 326.66 & 75.05 &     \\
                                  & 5000  &  100.0 & 0.0 & 100.0 & 0.0 & 100.0  & 0.0 & 200.0 & 0.0 & 300.0  &  0.0 & 1700.0 & 0.0 &   \\
\midrule
\multirow{4}{*}{$10^{-3}$}        &   5  & \textbf{2.73}  & 0.92 & \textbf{6.26} & 0.97 & - & - & 5.13 & 1.20 & -  & - & -  & - &      \\
                                  &  50  & 4.0  & 0.0 & 6.66 & 1.52 & 40.66  & 27.75 & 10.0 & 2.0 & 25.0  & 6.55  & - & - &    \\
                                  & 500   & 10.0  & 0.0 & 10.0  & 0.0 & \textbf{30.0} & 10.0 & 23.33 & 5.77 & 73.33  & 5.77  & 334.33 & 94.51 &     \\
                                  & 5000  &  100.0 & 0.0 & 100.0 & 0.0 & 100.0 & 0.0 & 200.0 & 0.0 & 300.0  &  0.0  & 1900.0 & 0.0 &   \\
\midrule
\multirow{4}{*}{$10^{-5}$}        &   5   &  \textbf{2.66} & 1.51 & \textbf{6.16} & 2.17 & -  & - & 5.46 & 2.61 & -  & -  & - & - &    \\
                                  &  50   & 3.66  & 5.77 & 8.0 & 1.73 & 34.33  & 8.38  & 13.33 & 2.51  & 31.66  & 4.16 &  - & - &  \\ 
                                  & 500   &  10.0 & 0.0 & 10.0 & 0.0 & \textbf{23.33}  & 15.27 & 26.66 & 5.77 & 73.33  & 20.81  & 373.33 & 5.77 & \\
                                  & 5000  & 100.0  & 0.0 & 100.0 & 0.0  & 100.0 & 0.0  & 200.0 & 0.0  & 300.0  & 0.0  & 1900.0 & 0.0 &   \\
\midrule                                  
\end{tabular}
}
\end{sc}
\end{small}
\end{center}
\vskip -0.1in
\end{table*}
\begin{table*}[t]
\caption{Comparison of LiBCoD and ProxCD, on formulation \eqref{nonlinear2}, for various block sizes and $\lambda$ values in CPU time (s) on the Gisette dataset.}
\label{block-size-comparison2}
\vskip 0.15in
\begin{center}
\begin{small}
\begin{sc}
\setlength{\tabcolsep}{5.8pt} 
\resizebox{\linewidth}{!}{
\begin{tabular}{lcccccccccccccccccccccccccc}
\toprule
$\lambda$ & Bl-size & \multicolumn{6}{c} {LiBCoD} & \multicolumn{6}{c}{ProxCD} \\
\cmidrule(lr){3-8} \cmidrule(lr){9-14}
 &  & \multicolumn{2}{c}{$85\%$} & \multicolumn{2}{c}{$90\%$} & \multicolumn{2}{c}{$95\%$} & \multicolumn{2}{c}{$85\%$} & \multicolumn{2}{c}{$90\%$} & \multicolumn{2}{c}{$95\%$} \\
 \cmidrule(lr){3-4} \cmidrule(lr){5-6}  \cmidrule(lr){7-8} \cmidrule(lr){9-10} \cmidrule(lr){11-12} \cmidrule(lr){13-14}  
 & & mean  & std & mean & std & mean  & std & mean & std &mean  & std & mean & std\\
\midrule
\multirow{4}{*}{$10^{2}$}         &   5  & -  & - & - & - &  - & - & - & - & -  & - & -  & - &      \\
                                  &  50  & 28.09 & 15.64  & - & - &  - & - & - & - & -  &  - & - & - &    \\
                                  & 500   &  10.56 & 16.58 & - & - & -  & - & - & - &  - &  - & - & - &     \\
                                  & 5000  & \textbf{3.64}  & 0.07  & - & - &  - & - & - & - & -  &  - & - & - &   \\
\midrule
\multirow{4}{*}{$5$}              &   5  & 16.82  & 9.01 & - & - &  - & - & 22.07 & 5.36 & -  & - & -  & - &      \\
                                  &  50  &  1.55 & 0.24 & 4.94 & 0.82 & -  & - & 3.71 & 0.76  & 14.03  & 2.60  & - & - &    \\
                                  & 500   & \textbf{0.60}  & 0.03 & \textbf{1.03} & 0.35 &  6.96 & 3.83 & 2.45  & 0.44 &  7.80 & 1.20  & - & - &     \\
                                  & 5000  & 4.56  & 0.26 & 4.56 & 0.26 &  \textbf{4.56} & 0.26 & 4.80 & 0.18 & 8.40  & 0.33   & -  & - &   \\
\midrule
\multirow{4}{*}{$0.1$}         &   5  &  22.30 & 9.83 & - & - & - & - & 33.42 & 17.43 & -  & - & - & - &      \\
                                  &  50  & 2.44  & 1.32 & 5.21 & 1.50 & 19.54  & 3.20 & 4.04 & 2.26 &  7.19 & 3.01  & 44.18 & 12.74 &    \\
                                  & 500   & \textbf{0.71}  & 0.04 & \textbf{0.71} & 0.04  &  \textbf{1.89} & 0.46 & 4.42 & 1.56 & 7.71  & 1.53  & 32.16  &  12.68 &     \\
                                  & 5000  & 5.81  & 0.14 & 5.81  & 0.14 &  5.81  & 0.14 & 1.57 & 0.06 & 2.98 & 0.13 & 23.73  & 0.79 &   \\
\midrule
\multirow{4}{*}{$10^{-3}$}         &   5  & 22.9  & 16.44 & - & - & -  & - & 26.47 & 19.14 &  - & - & -  & - &      \\
                                  &  50  &  1.94 &  0.64 & 4.93 & 0.82 & 22.18  & 5.29 & 3.56 & 0.85 & 8.32  & 1.99  & 40.87 & 6.90 &    \\
                                  & 500   & \textbf{0.57}  & 0.04 &  \textbf{0.57}  & 0.04  & \textbf{2.60}  & 0.67 & 1.98 & 0.54 & 4.88 & 0.85  & 30.94 & 9.95  &     \\
                                  & 5000  & 4.88  & 0.13  &  4.88 & 0.13  &  4.88 & 0.13  & 1.21 & 0.05 & 2.34  & 0.05 & 20.50 & 0.44 &   \\
\midrule
\multirow{4}{*}{$10^{-5}$}         &   5  & 19.35  & 7.39 & 48.46 & 8.13 &  - & - & 21.67 & 6.44 & -  & - & -  & - &      \\
                                  &  50  & 1.53  & 0.29 & 4.14 & 0.78 & 18.57 & 4.19 & 3.23 & 1.02 & 8.80 & 3.22 & 35.58 & 9.34 &    \\
                                  & 500   &  \textbf{0.54} & 0.02 & \textbf{0.54} & 0.02 & \textbf{2.02}  & 0.24 & 3.08 & 1.19 &  6.79 & 2.23 & 27.25 & 5.51  &     \\
                                  & 5000  & 4.76  & 0.25  & 4.76 & 0.25  &  4.76 & 0.25  & 1.20 & 0.06 & 2.40 & 0.15 & 21.76 & 0.65 &   \\
\midrule                                  
\end{tabular}
}
\end{sc}
\end{small}
\end{center}
\vskip -0.1in
\end{table*}

\begin{table*}[t]
\caption{Comparison of LiBCoD and ProxCD, on formulation \eqref{nonlinear2}, for various block sizes and $\lambda$ values in epoch (i.e., percentage of gradient evaluation) on the Gisette dataset.}
\label{block-size-comparison2_epoch}
\vskip 0.15in
\begin{center}
\begin{small}
\begin{sc}
\setlength{\tabcolsep}{1pt} 
\resizebox{\linewidth}{!}{
\begin{tabular}{lcccccccccccccccccccccccccc}
\toprule
$\lambda$ & Bl-size & \multicolumn{6}{c} {LiBCoD} & \multicolumn{6}{c}{ProxCD} \\
\cmidrule(lr){3-8} \cmidrule(lr){9-14}
 &  & \multicolumn{2}{c}{$85\%$} & \multicolumn{2}{c}{$90\%$} & \multicolumn{2}{c}{$95\%$} & \multicolumn{2}{c}{$85\%$} & \multicolumn{2}{c}{$90\%$} & \multicolumn{2}{c}{$95\%$} \\
 \cmidrule(lr){3-4} \cmidrule(lr){5-6}  \cmidrule(lr){7-8} \cmidrule(lr){9-10} \cmidrule(lr){11-12} \cmidrule(lr){13-14}  
 & & mean(\%)  & std(\%) & mean(\%) & std(\%) & mean(\%)  & iter(\%) & mean(\%) & std(\%) &mean(\%)  & iter(\%) & mean(\%) & std(\%)\\
\midrule
\multirow{4}{*}{$10^{2}$}         &   5  & -  & - & - & - &  - & - & - & - & -  & - & -  & - &      \\
                                  &  50  & \textbf{59.66} & 33.54  & - & - &  - & - & - & - & -  &  - & - & - &    \\
                                  & 500   &  193.33 & 300.38 & - & - & -  & - & - & - &  - &  - & - & - &     \\
                                  & 5000  & 100.0 & 0.0  & - & - &  - & - & - & - & -  &  - & - & - &   \\
\midrule
\multirow{4}{*}{$5$}              &   5  & 3.63  & 1.96 & - & - & - & - & 4.86 & 1.19 & -  & - & -  & - &      \\
                                  &  50  &  \textbf{3.33} & 0.57 & \textbf{10.33} & 1.52 & -  & - & 7.66 & 1.15  & 29.33  & 5.50  & - & - &    \\
                                  & 500   & 10.0  & 0.0 & 16.66 & 5.77 &  123.33 & 66.58 & 23.33  & 5.77 &  110.0 & 43.58  & - & - &     \\
                                  & 5000  & 100.0  & 0.0 & 100.0 & 0.0 &  100.0 & 0.0 & 200.0 & 0.0 & 500.0  & 0.0   & -  & - &   \\
\midrule
\multirow{4}{*}{$0.1$}            &   5  &  4.50 & 2.06 & - & - & - & - & 6.86 & 4.0 & -  & - & - & - &      \\
                                  &  50  & \textbf{4.33}  & 2.08 & \textbf{9.33} & 2.30 & 35.66  & 6.80 & 6.66 & 3.21 &  12.66 & 4.61  & 74.66 & 21.59 &    \\
                                  & 500   & 10.0  & 0.0 & 10.0 & 0.0  &  \textbf{26.66} & 5.77 & 43.33 & 15.27 & 76.66  & 11.54  & 326.66  &  98.14 &     \\
                                  & 5000  & 100.0   & 0.0 & 100.0   & 0.0 &  100.0   & 0.0 & 200.0 & 0.0 & 400.0 & 0.0 & 3200.0  & 0.0 &   \\
\midrule
\multirow{4}{*}{$10^{-3}$}        &   5  & \textbf{4.03}  & 1.95 & - & - & -  & - & 4.60 & 2.32 &  - & - & -  & - &      \\
                                  &  50  &  4.33 &  1.15 & 11.0 & 1.0 & 50.33  & 10.78 & 7.33 & 2.51 & 18.0  & 3.46  & 94.33 & 16.62 &    \\
                                  & 500  & 10.0 & 0.0 &  \textbf{10.0} & 0.0  & \textbf{46.66}  & 15.27 & 23.33 & 5.77 & 63.33 & 11.54  & 386.66 & 123.42 &     \\
                                  & 5000  & 100.0  & 0.0  &  100.0 & 0.0  &  100.0 & 0.0  & 200.0 & 0.0 & 400.0  & 0.0 & 3400.0 & 0.0 &   \\
\midrule
\multirow{4}{*}{$10^{-5}$}        &   5  & 4.40  & 1.73 & 10.83 & 1.71 & - & - & 5.0 & 1.55 & -  & - & -  & - &      \\
                                  &  50  & \textbf{3.66}  & 0.57 & \textbf{9.66} & 1.52 & 43.33 & 10.11 & 7.33 & 2.08 & 20.33 & 6.11 & 84.0 & 21.28 &    \\
                                  & 500   &  10.0 & 0.0 & 10.0 & 0.0 & \textbf{36.66}  & 5.77 & 36.66 & 15.27 &  80.0 & 62.45 & 333.33 & 58.59  &     \\
                                  & 5000  & 100.0   & 0.0  & 100.0  & 0.0  &  100.0  & 0.0  & 200.0 & 0.0 & 400.0 & 0.0 & 3400.0 & 0.0 &   \\
\midrule                                  
\end{tabular}
}
\end{sc}
\end{small}
\end{center}
\vskip -0.1in
\end{table*}

Our second set of experiments consists in running Monotone LiBCoD and comparing its performance against ProxCD in terms of the evolution of the loss function over time and iterations, using several datasets from LIBSVM collection \cite{chang2011libsvm} and different blocksizes ranges from $\{0.1\%, 1\%, 10\%, 100\%\}$. In this setup, we first run both methods for $100$ iterations (see \ref{fig:cvge1}) and then separately for a fixed predefined CPU time (see \ref{fig:cvge1cpu}). As shown in \cref{fig:cvge1,fig:cvge1cpu}, one can observe that Monotone LiBCoD consistently outperforms ProxCD, achieving lower loss values in significantly fewer iterations and reduced computational time. Moreover, one can observe that using only $10\%$ of the original block size yields performance comparable to the full Gauss-Newton method.
\begin{figure}[t]
    \centering
    \begin{minipage}{0.85\textwidth}
        \centering
        \subfigure[Gisette]{%
            \includegraphics[width=0.23\textwidth]{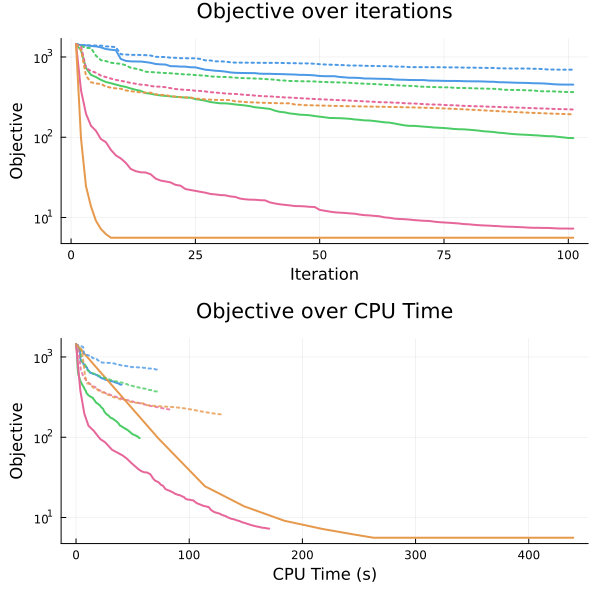}
            \label{fig:colon0}
        }\hfill
        \subfigure[Colon-Cancer]{%
            \includegraphics[width=0.23\textwidth]{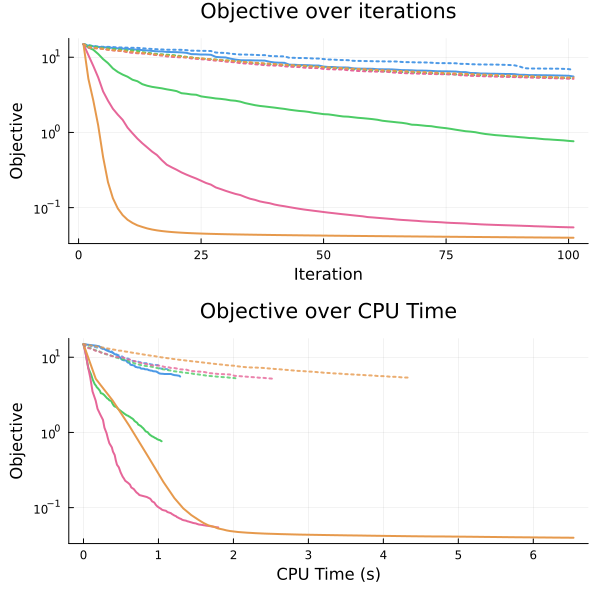}
            \label{fig:gisette0}
        }\hfill
        \subfigure[Duke]{%
            \includegraphics[width=0.23\textwidth]{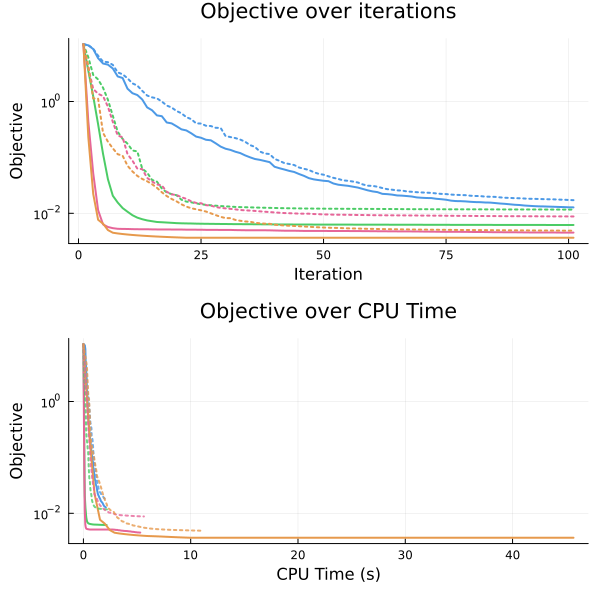}
            \label{fig:duke0}
        }\hfill
        \subfigure[Leukemia]{%
            \includegraphics[width=0.23\textwidth]{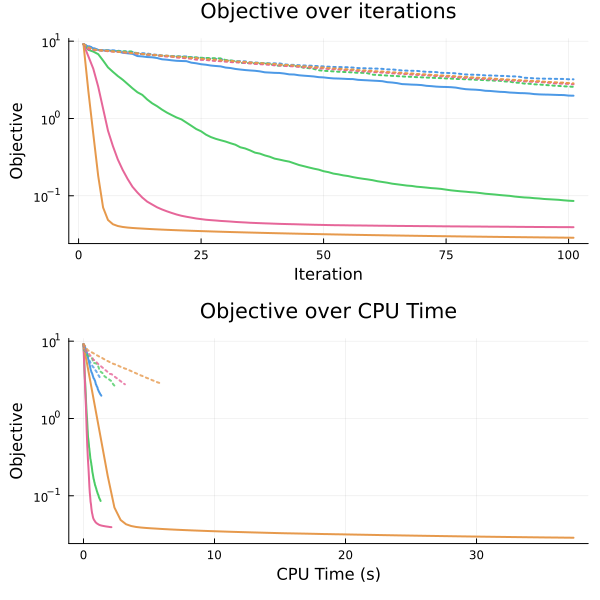}
            \label{fig:leuk0}
        }
    \end{minipage}%
    \begin{minipage}{0.15\textwidth}
        \centering
        \includegraphics[width=\textwidth]{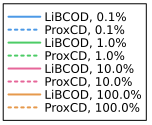}
    \end{minipage}
    \caption{Comparison between Monotone LiBCoD \ref{alg:libcod} and ProxCD with $\lambda = 0.001$ on formulation \eqref{nonlinear1}. Monotone LiBCoD consistently achieves lower loss values in fewer iterations and less CPU time. Notably, using only $10\%$ of the block size yields performance comparable to the full Gauss-Newton method.}
    \label{fig:cvge1}
\end{figure}
\begin{figure}[t]
    \centering
    \begin{minipage}{0.85\textwidth}
        \centering
        \subfigure[Gisette]{%
            \includegraphics[width=0.23\textwidth]{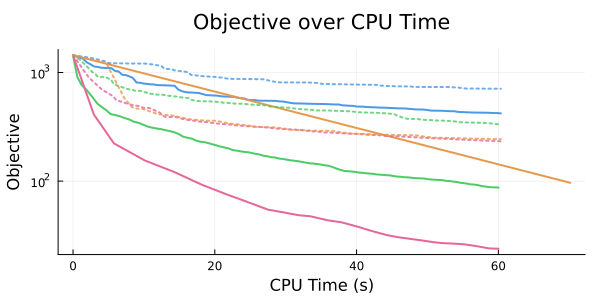}
            \label{fig:colon0cpu}
        }\hfill
        \subfigure[Colon-Cancer]{%
            \includegraphics[width=0.23\textwidth]{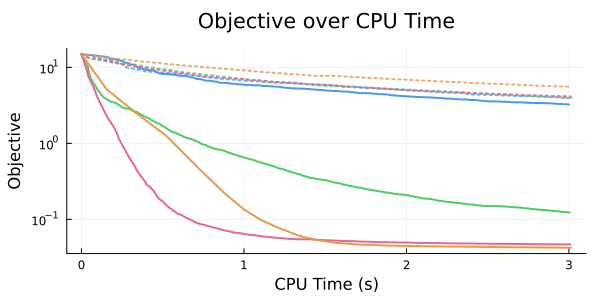}
            \label{fig:gisette0cpu}
        }\hfill
        \subfigure[Duke]{%
            \includegraphics[width=0.23\textwidth]{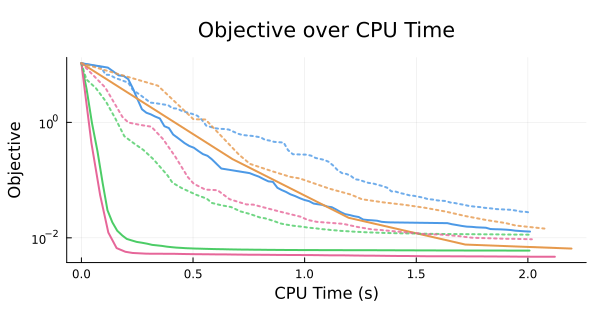}
            \label{fig:duke0cpu}
        }\hfill
        \subfigure[Leukemia]{%
            \includegraphics[width=0.23\textwidth]{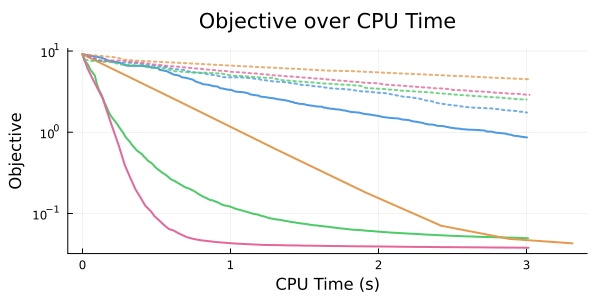}
            \label{fig:leuk0cpu}
        }
    \end{minipage}%
    \begin{minipage}{0.15\textwidth}
        \centering
        \includegraphics[width=\textwidth]{plots/monotoneVsnonmonotone/LOSS.plots/legends.png}
    \end{minipage}
    \caption{Comparison between Monotone LiBCoD \ref{alg:libcod} and ProxCD with $\lambda = 0.001$  under predefined CPU time on formulation \eqref{nonlinear1}. Monotone
    LiBCoD outperform ProxCD and consistently achieves lower loss values in less CPU time.}
    \label{fig:cvge1cpu}
\end{figure}

\subsubsection{Nonmototone LiBCoD}
Our third set of experiments focuses on evaluating the performance of the nonmonotone LiBCoD method using several datasets from the LIBSVM collection. Specifically, we run Nonmotonote LiBCoD \ref{alg:nonmonotone-libcod} with different fixed values of $u_k = u \in \{0.1, 0.5, 1\}, \forall k\geq 0$, using a regularization parameter $\lambda = 0.001$ and selecting only $10\%$ of the data at each iteration.
We then report the evolution of the loss function with respect to both the number of iterations and CPU time—first after 100 iterations (see \ref{fig:cvge1_mnVSnonm0}), and then separately for a fixed, predefined CPU time (see \ref{fig:cvge1_mnVSnonm01}).
From \cref{fig:cvge1_mnVSnonm0,fig:cvge1_mnVSnonm01}, we observe that the method performs consistently across different values of $u$, with all variants exhibiting comparable performance. Notably, the choice $u = 0.5$ yields slightly better results in terms of convergence speed, though the differences remain relatively minor.

\begin{figure}[t]
    \centering
    \begin{minipage}{\textwidth}
        \centering
        \subfigure[Gisette]{%
            \includegraphics[width=0.23\textwidth]{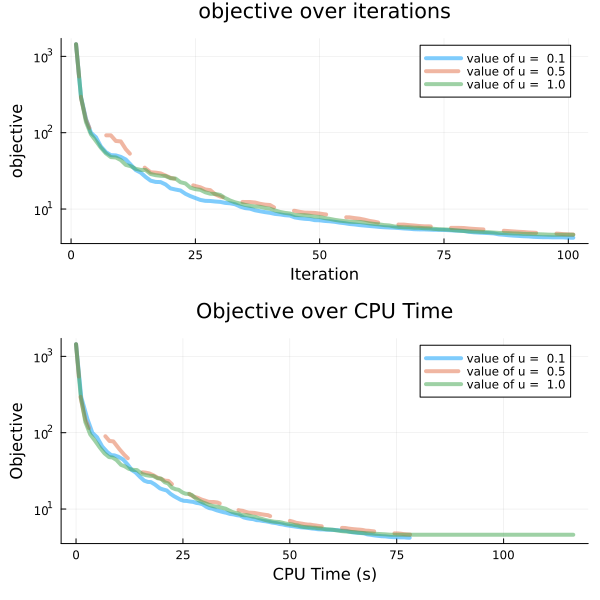}
            \label{fig:gisette1}
        }\hfill
        \subfigure[Colon-cancer]{%
            \includegraphics[width=0.23\textwidth]{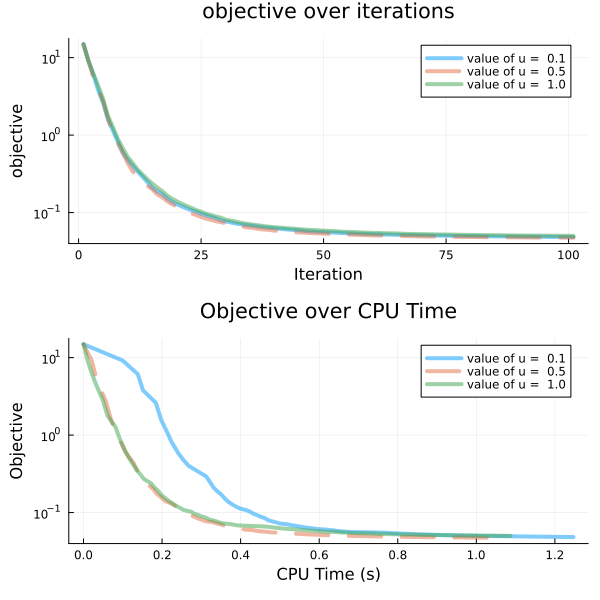}
            \label{fig:colon1}
        }\hfill
        \subfigure[Duke]{%
            \includegraphics[width=0.23\textwidth]{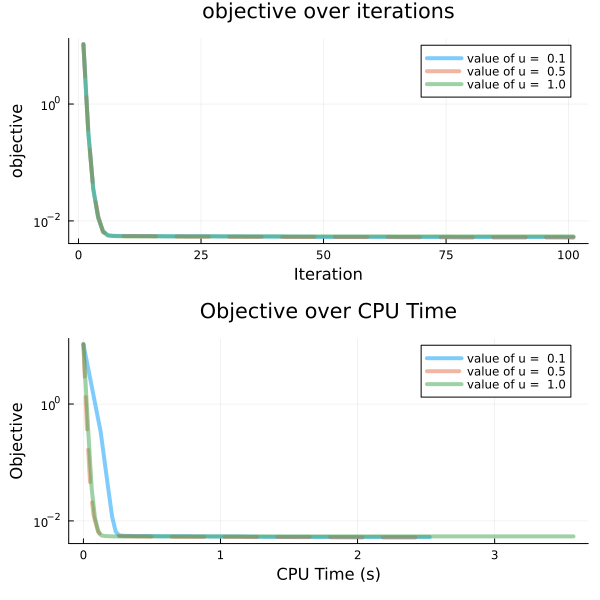}
            \label{fig:duke1}
        }\hfill
        \subfigure[Leukemia]{%
            \includegraphics[width=0.23\textwidth]{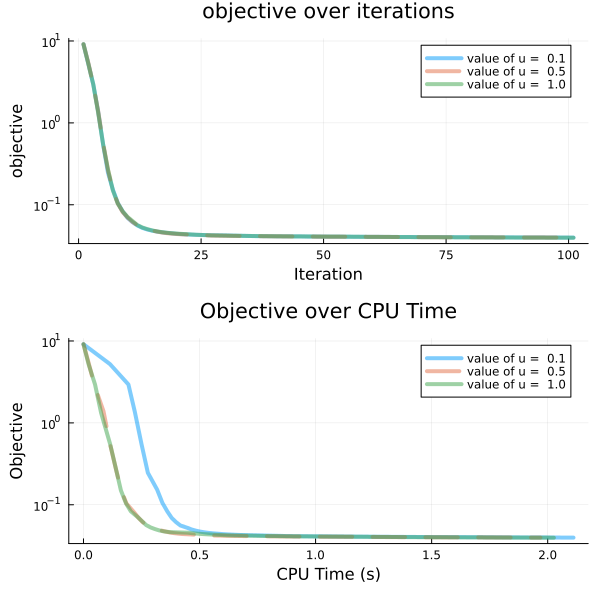}
            \label{fig:leukemia1}
        }
    \end{minipage}%
    \caption{Performance of Nonmonotone LiBCoD \ref{alg:nonmonotone-libcod} for different values of $u_k = \{ 0.1, 0.5, 1 \}$, with $\lambda = 0.001$, using $10\%$ block size for $100$ iterations on formulation \eqref{nonlinear1}. All variants show comparable performance.}
    \label{fig:cvge1_mnVSnonm0}
\end{figure}
\begin{figure}[t]
    \centering
    \begin{minipage}{\textwidth}
        \centering
        \subfigure[Gisette]{%
            \includegraphics[width=0.23\textwidth]{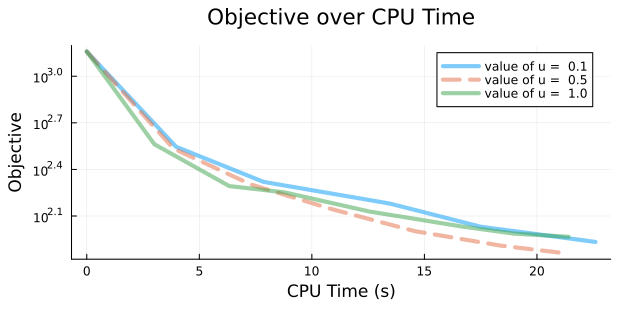}
            \label{fig:gisette11}
        }\hfill
        \subfigure[Colon-cancer]{%
            \includegraphics[width=0.23\textwidth]{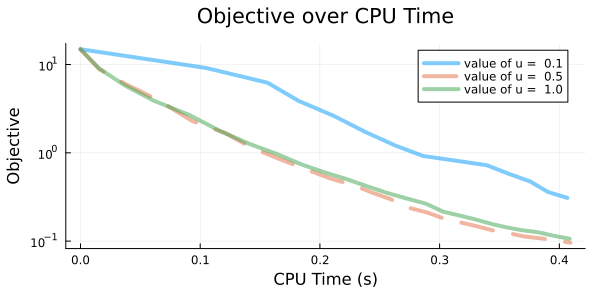}
            \label{fig:colon11}
        }\hfill
        \subfigure[Duke]{%
            \includegraphics[width=0.23\textwidth]{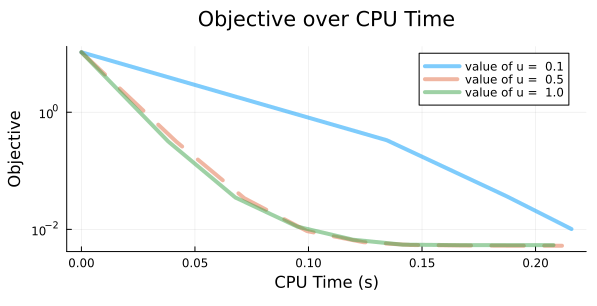}
            \label{fig:duke11}
        }\hfill
        \subfigure[Leukemia]{%
            \includegraphics[width=0.23\textwidth]{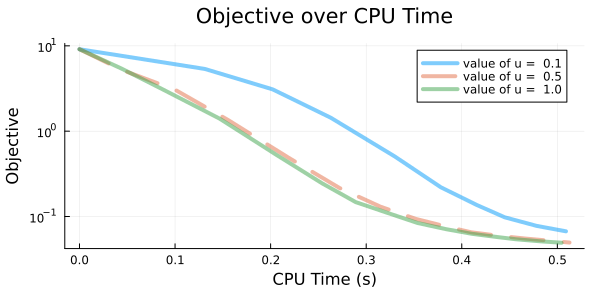}
            \label{fig:leukemia11}
        }
    \end{minipage}%
    \caption{Performance of Nonmonotone LiBCoD \ref{alg:nonmonotone-libcod} for different values of $u_k = \{ 0.1, 0.5, 1 \}$, with $\lambda = 0.001$, using $10\%$ block size under predefined CPU time on formulation \eqref{nonlinear1}. All variants show comparable performance.}
    \label{fig:cvge1_mnVSnonm01}
\end{figure}



\section{Conclusion}

In this paper, we proposed a block coordinate descent method with two variants, monotone and nonmonotone, for solving composite optimization problems. In both variants, we linearized the smooth part of the objective function in a Gauss-Newton approach and added a regularization term. By dynamically adjusting the regularization parameter, we established iteration complexity guarantees for achieving an \(\epsilon\)-stationary point of the problem. Additionally, our numerical experiments demonstrate the efficiency of the proposed method through comparisons with several baseline methods from the literature. While our results assume the separability of the nonsmooth part, it would be interesting to analyze the case where the nonsmooth part is not separable, as this would allow tackling a broader class of problems.

 \section*{Acknowledgement}
 YN and TV are supported by the Academy of Finland grant 345486. LE.B acknowledges funding from the European Union’s Horizon 2020 research and innovation programme under the Marie Skłodowska-Curie Grant Agreement No. 953348. 



\medskip



\end{document}